\documentclass[10pt,a4paper,reqno]{amsart}
\usepackage{amsthm}
\usepackage{amsmath}
\usepackage{amssymb}
\usepackage{graphicx,color}

\usepackage[font=small]{caption}

\usepackage{multirow}
\usepackage[shortlabels]{enumitem}

\makeatletter
\theoremstyle{plain}
\newtheorem{thm}{Theorem}
  \theoremstyle{definition}
  
  \theoremstyle{remark}
  \newtheorem{rem}[thm]{Remark}
  \theoremstyle{plain}
  
  \theoremstyle{plain}
  \newtheorem{lem}[thm]{Lemma}
  \theoremstyle{plain}
  \newtheorem{cor}[thm]{Corollary}
 \theoremstyle{definition}
  
  \theoremstyle{remark}
  \newtheorem*{rem*}{Remark}

  \theoremstyle{definition}


\usepackage{amsfonts}
\usepackage{mathrsfs}

\addtolength{\textwidth}{4em}
\addtolength{\hoffset}{-2em}
\addtolength{\textheight}{10ex}
\addtolength{\voffset}{-6ex}







\newcommand{\N}{\mathbb{N}}
\newcommand{\R}{{\mathbb{R}}}
\newcommand{\C}{{\mathbb{C}}}

\newcommand{\dd}{{\rm d}}
\newcommand{\ii}{{\rm i}}

\newcommand{\diag}{\mathop\mathrm{diag}\nolimits}
\newcommand{\spn}{\mathop\mathrm{span}\nolimits}
\newcommand{\Dom}{\mathop\mathrm{Dom}\nolimits}

\newcommand{\Ran}{\mathop\mathrm{Ran}\nolimits}
\newcommand{\Ker}{\mathop\mathrm{Ker}\nolimits}
\newcommand{\spec}{\mathop\mathrm{spec}\nolimits}

\renewcommand{\Re}{\mathop\mathrm{Re}\nolimits}

\newcommand{\Tr}{\mathop\mathrm{Tr}\nolimits}

\newcommand*\pFqskip{8mu} 
\catcode`,\active
\newcommand*\pFq{\begingroup
        \catcode`\,\active
        \def ,{\mskip\pFqskip\relax}%
        \dopFq
}
\catcode`\,12
\def\dopFq#1#2#3#4#5{%
        {}_{#1}F_{#2}\biggl(\genfrac..{0pt}{}{#3}{#4}\biggl|\,#5\biggr)%
        \endgroup
}

\makeatother

\begin{document}

\title[]{Spectral representation of some weighted Hankel matrices and orthogonal polynomials from the Askey scheme}

\author{Franti{\v s}ek {\v S}tampach}
\address[Franti{\v s}ek {\v S}tampach]{
	Department of Applied Mathematics, Faculty of Information Technology, Czech Technical University in~Prague, 
	Th{\' a}kurova~9, 160~00 Praha, Czech Republic
	}
\email{stampfra@fit.cvut.cz}

\author{Pavel {\v S}{\v t}ov{\' i}{\v c}ek}
\address[Pavel {\v S}{\v t}ov{\' i}{\v c}ek]{
	Department of Mathematics, Faculty of Nuclear Science and Physical Engineering, Czech Technical University in~Prague,
	Trojanova~13, 120~00 Praha, Czech Republic}
\email{stovicek@fjfi.cvut.cz}

\subjclass[2010]{47B35; 47B37; 47A10; 33C45}

\keywords{weighted Hankel matrix, spectral representation, diagonalization, orthogonal polynomials, Askey scheme}

\date{\today}

\begin{abstract}
We provide an explicit spectral representation for several weighted Hankel matrices by means of the so called commutator method. These weighted Hankel matrices are found in the commutant of Jacobi matrices associated with orthogonal polynomials from the Askey scheme whose Jacobi parameters are polynomial functions of the index. We also present two more results of general interest. First, we give a complete description of the commutant of a Jacobi matrix. Second, we deduce a necessary and sufficient condition for a weighted
Hankel matrix commuting with a Jacobi matrix to determine a unique self-adjoint operator on~$\ell^{2}(\N_{0})$.
\end{abstract}

\maketitle

\section{Introduction}

One of the very few Hankel matrices that admits a completely explicit diagonalization is the
Hilbert matrix whose entries are given by $1/(m+n+1)$, for $m,n\in\N_{0}$.
The Hilbert matrix represents a bounded operator acting on $\ell^{2}(\N_{0})$ with continuous spectrum 
equal to the interval $[0,\pi]$. This was shown by Magnus~\cite{magnus50}, see also~\cite{magnus49, shanker49}
for related results. Later on, Rosenblum used an integral operator unitarily equivalent to the generalized Hilbert matrix -- a one-parameter generalization of the Hilbert matrix -- and applied an idea of the commutator method to deduce a complete spectral representation of the generalized Hilbert matrix, see~\cite{rosenblum2}.

The transformation of the Hilbert matrix in an integral operator is not necessary and one can apply the 
commutator method directly to the semi-infinite matrices. This approach relies on finding a Jacobi matrix with explicitly solvable spectral problem that commutes with the Hilbert matrix (or any other operator whose spectral properties are to be analyzed). In the case of the generalized Hilbert matrix, such Jacobi matrix exists indeed~\cite{kalvoda-stovicek_lma16} and corresponds to a~special family of orthogonal polynomials known as the continuous dual Hahn polynomials~\cite[Sec.~9.3]{koekoek-etal_10}. A continuous variant of the commutator method where the role of the Jacobi operator and the Hankel matrix is played by a second order differential operator and an integral Hankel operator, respectively, was described by Yafaev in~\cite{yafaev_faa10} and applied to several concrete Hankel integral operators therein.

In fact, the authors of~\cite{kalvoda-stovicek_lma16} used the commutator method to diagonalize a three parameter family of \emph{weighted} Hankel matrices which reduces to the Hilbert matrix in a very special case. By the weighted Hankel matrix, we mean a matrix with entries of the form
\begin{equation}
 \mathcal{H}_{m,n}:=w_{m}w_{n}h_{m+n}, \quad m,n\in\N_{0},
\label{eq:def_weighted_hankel}
\end{equation}
where $w_{n}\neq0$, $\forall n\in\N_{0}$, are referred to as weights. In contrast to the spectral theory
of the Hankel operators which is deeply developed nowadays~\cite{bottcher-silbermann_06,peller_03,widom_tams66},
the general theory of their weighted generalization is almost missing and various results appear rather sporadically, 
see, for example, Chp.~6,~\S8 in~\cite{peller_03}.

In this paper, $\mathcal{J}$ stands for a Hermitian non-decomposable Jacobi matrix, where for $n\in\N_{0}$, we denote
\[
 b_{n}:=\mathcal{J}_{n,n} \quad\mbox{ and }\quad a_{n}:=\mathcal{J}_{n,n+1}=\mathcal{J}_{n+1,n}.
\]
The assumption requiring $\mathcal{J}$ to be non-decomposable means that $a_{n}\neq0$, $\forall n\in\N_{0}$.
We focus on the class of Jacobi matrices where $b_{n}$ and $a_{n}^{2}$ are polynomials in $n$.
As $a^{2}_{n}$ is an analytic function of $n$, we add an extra condition requiring $a_{-1}=0$. We seek for weighted Hankel matrices commuting with such $\mathcal{J}$. We continue the analysis started in~\cite{kalvoda-stovicek_lma16} and use the Askey scheme of hypergeometric orthogonal polynomials~\cite{koekoek-etal_10}
as a rich source of Jacobi operators with explicitly solvable spectral problem. Except the aforementioned continuous dual Hahn polynomials, among the orthogonal polynomials from the Askey scheme whose Jacobi parameters $b_{n}$ and $a_{n}^{2}$ are polynomials of $n$ there belong Hermite, Laguerre, Meixner, and Meixner--Pollaczek polynomials. In addition, Krawtchouk and dual Hahn polynomials whose measures of orthogonality are finitely supported also fulfill the restriction in the setting of finite Jacobi matrices.

Here and below, by the commutant of~$\mathcal{J}$ we mean the space of matrices formally commuting with~$\mathcal{J}$. It can be shown that there is always a weighted Hankel matrix in the commutant of~$\mathcal{J}$ if $b_{n}$ is a polynomial of~$n$ of degree less than or equal to~$2$ and $a_{n}^{2}$ a polynomial of~$n$ of degree less than or equal to~$4$. However, we do not discuss scrupulously all the configurations for the degrees. Instead, we focus 
only on several configurations corresponding to the families of orthogonal polynomials mentioned above 
for which the Jacobi matrix is explicitly diagonalizable and the commutator method yields the spectral 
representation of an obtained weighted Hankel matrix. Furthermore, we restrict the obtained class of weighted 
Hankel matrices to those which determine a densely defined operator on $\ell^{2}(\N_{0})$ and does not degenerate to the case of a~rank-one operator whose matrix entries are of the form $\mathcal{H}_{m,n}=w_{n}w_{m}$, for $m,n\in\N_{0}$, because their spectral properties are not interesting. 

The paper is organized as follows. Section~\ref{sec:main} contains a summary of main results, i.e., explicit spectral representations 
of several operators given by weighted Hankel matrices. In Section~\ref{sec:prelim}, preliminary but necessary results are worked out.
First of all, Theorem~\ref{thm:commut_matr_Jac} provides a description of the commutant of a Jacobi matrix in terms of corresponding orthonormal
polynomials. Second, Theorem~\ref{thm:def_H} provides a necessary and sufficient condition on a weighted Hankel matrix from the commutant of a Jacobi matrix
to determine a unique self-adjoint operator on~$\ell^{2}(\N_{0})$. Regardless of the main aim, both theorems can be of independent interest.
Section~\ref{sec:proofs} contains proofs of the main results from Section~\ref{sec:main}. 
Finally, in Section~\ref{sec:conseq_spec_func}, several identities for orthogonal polynomials that follow from the main results as consequences are given.
Some of the formulas are known, some seem to be new.

\section{Main results}\label{sec:main}

In this section we summarize main results providing explicit spectral representations for several operators determined by special weighted Hankel matrices. By a spectral representation of a given self--adjoint operator $H$ on a Hilbert space $\mathscr{H}$ we understand a triplet $(T_{h},L^{2}(\R,\dd\mu),U)$, where $T_{h}$ is an operator of multiplication by a real function $h$ on $L^{2}(\R,\dd\mu)$ unitarily equivalent to~$H$ via a unitary mapping $U:\mathscr{H}\to L^{2}(\R,\dd\mu)$, i.e., $UHU^{-1}=T_{h}$. We specify the function $h$, the measure $\mu$, and the unitary mapping $U$ in theorems below and draw immediate conclusions on the spectrum of $H$. 

Here $\mathscr{H}=\ell^{2}(\N_{0})$ and $\{e_{n} \mid n\in\N_{0}\}$ denotes the standard basis of~$\ell^{2}(\N_{0})$. In each of the studied cases, the unitary mapping $U:\ell^{2}(\N_{0})\to L^{2}(\R,\dd\mu)$ is defined by its values on the standard basis, i.e.,
\begin{equation}
 U:e_{n}\mapsto\hat{P}_{n}, \quad \forall n\in\N_{0},
 \label{eq:def_U}
\end{equation}
where $\{\hat{P}_{n} \mid n\in\N_{0}\}$ stands for an orthonormal basis of $L^{2}(\R,\dd\mu)$ given by the normalized polynomials orthogonal with respect to~$\mu$. Furthermore, columns as well as rows of the weighted Hankel matrix~$\mathcal{H}$ in question belong to~$\ell^{2}(\N_{0})$, $\mathcal{H}$ determines unambiguously a~self-adjoint operator~$H$ on~$\ell^{2}(\N_{0})$, and the unitary equivalence $UHU^{-1}=T_{h}$ implies that
\[
 \sum_{n=0}^{\infty}\mathcal{H}_{m,n}\hat{P}_{n}=h\hat{P}_{m}, \quad \forall m\in\N_{0},
\]
where the left-hand side converges in~$L^{2}(\R,\dd\mu)$.

The first weighted Hankel matrix $\mathcal{H}^{(1)}$ has entries
\begin{equation}
 \mathcal{H}^{(1)}_{m,n}:=\frac{k^{m+n}\Gamma(m+n+\alpha)}{\sqrt{m!n!\Gamma(m+\alpha)\Gamma(n+\alpha)}}, \quad m,n\in\N_{0},
\label{eq:def_hank1}
\end{equation}
with $k\in(0,1)$ and $\alpha>0$. The corresponding operator on $\ell^{2}(\N_{0})$ possesses an interesting
spectral phase transition depending on the value of the parameter $k$. The diagonalization of this operator will be carried out with the aid of Meixner--Polaczek, Laguerre, and Meixner polynomials depending on the value of the parameter $k$.
Therefore we recall the definition of the Meixner--Polaczek polynomials~\cite[Eq.~9.7.1]{koekoek-etal_10}
\begin{equation}
P_{n}^{(\lambda)}(x;\phi)=\frac{(2\lambda)_{n}}{n!}\,\, e^{\ii n\phi}
\pFq{2}{1}{-n,\lambda+\ii x}{2\lambda}{1-e^{-2\ii\phi}}, \quad \phi\in(0,\pi), \; \lambda>0,
\label{eq:def_meix-polllac}
\end{equation}
the Laguerre polynomials~\cite[Eq.~9.12.1]{koekoek-etal_10}
\begin{equation}
L_{n}^{(a)}(x)=\frac{(a+1)_{n}}{n!}\pFq{1}{1}{-n}{a+1}{x}, \quad a>-1,
\label{eq:def_laguerre}
\end{equation}
and the Meixner polynomials~\cite[Eq.~9.10.1]{koekoek-etal_10}
\begin{equation}
 M_{n}(x;\beta,c)=\pFq{2}{1}{-n,-x}{\beta}{1-\frac{1}{c}}, \quad c\in(0,1), \; \beta>0.
\label{eq:def_meixner}
\end{equation}
We follow the standard notation for the hypergeometric series, see for example~\cite{koekoek-etal_10} for definitions.

\begin{thm}\label{thm:meixpol_lag_meix}
 For $k\in(0,1)$ and $\alpha>0$, the matrix $\mathcal{H}^{(1)}$ given by~\eqref{eq:def_hank1} determines a unique self-adjoint operator $H^{(1)}$ on $\ell^{2}(\N_{0})$
 which is unitarily equivalent via $U$ given by~\eqref{eq:def_U} to the operator of multiplication by $h$ on $L^{2}(\R,\dd\mu)$ that are as follows:
 \begin{enumerate}[{\upshape i)}]
  \item If $1/2<k<1$, then $\mu$ is absolutely continuous supported on $\R$ with the density 
  \[
   \frac{\dd\mu}{\dd x}(x)=\frac{(4k^{2}-1)^{(\alpha-1)/2}}{2\pi k^{\alpha}\Gamma(\alpha)}
   \exp\!\left(\frac{-2x}{\sqrt{4k^{2}-1}}\arcsin\left(\frac{1}{2k}\right)\!\right)
   \!\left|\Gamma\left(\frac{\alpha}{2}+\frac{\ii x}{\sqrt{4k^{2}-1}}\right)\right|^{2}\!,
  \]
  \[
    h(x)=k^{-\alpha}\exp\!\left(\frac{2x}{\sqrt{4k^{2}-1}}\arccos\left(\frac{1}{2k}\right)\!\right)\!,
  \]
  and
  \[
   \hat{P}_{n}(x)=\sqrt{\frac{\Gamma(\alpha)n!}{\Gamma(n+\alpha)}}\,P_{n}^{(\alpha/2)}\!\left(\frac{x}{\sqrt{4k^{2}-1}};\arccos\left(\frac{1}{2k}\right)\!\right)\!,
  \]
  for $x\in\R$. Consequently, $H^{(1)}$ is an unbounded operator with
  \[
   \spec\left(H^{(1)}\right)=\spec_{\text{ac}}\left(H^{(1)}\right)=[\,0,+\infty).
  \]
  \item If $k=1/2$, then $\mu$ is absolutely continuous supported on $[0,\infty)$ with the density  
  \[
   \frac{\dd\mu}{\dd x}(x)=\frac{2^{\alpha}}{\Gamma(\alpha)}x^{\alpha-1}e^{-2x},
  \]
  \[
  h(x)=2^{\alpha}e^{-2x},
  \]
  and
  \[
  \hat{P}_{n}(x):=\sqrt{\frac{\Gamma(\alpha)n!}{\Gamma(n+\alpha)}}\,L^{(\alpha-1)}_{n}(2x),
  \]
  for $x\in(0,\infty)$. Consequently, $H^{(1)}$ is a bounded operator with
  \[
  \spec\left(H^{(1)}\right)=\spec_{\text{ac}}\left(H^{(1)}\right)=[0,2^{\alpha}]\quad\mbox{ and }\quad \|H\|=2^{\alpha}.
  \]
  \item If $0<k<1/2$, then $\mu$ is discrete supported on $\sqrt{1-4k^{2}}\,\N_{0}$ such that, for $n\in\N_{0}$,
  \[
   \mu\!\left(n\sqrt{1-4k^{2}}\right)=
   \frac{(\alpha)_{n}}{n!(2k^{2})^{n+\alpha}}\left(4k^{2}-1+\sqrt{1-4k^{2}}\right)^{\!\alpha}\left(1-2k^{2}-\sqrt{1-4k^{2}}\right)^{\!n}.
  \]
  Further, for $x\in\sqrt{1-4k^{2}}\,\N_{0}$, we have
  \[
   h(x)=\left(\frac{1-\sqrt{1-4k^{2}}}{2k^{2}}\right)^{\!\alpha}\left(\frac{1-2k^{2}-\sqrt{1-4k^{2}}}{2k^{2}}\right)^{x/\sqrt{1-4k^{2}}}
  \]
  and
  \begin{align*}
   \hat{P}_{n}(x)=&\left(\frac{1-2k^{2}-\sqrt{1-4k^{2}}}{2k^{2}}\right)^{\!n/2}\\
   &\hskip12pt\times\sqrt{\frac{\Gamma(n+\alpha)}{\Gamma(\alpha)n!}}\,M_{n}\left(\frac{x}{\sqrt{1-4k^{2}}};\alpha,\frac{1-2k^{2}-\sqrt{1-4k^{2}}}{2k^{2}}\right)\!.
  \end{align*}
  Consequently, $H^{(1)}$ is a compact (even trace class) operator with 
  \[
  \spec\left(H^{(1)}\right)\setminus\{0\}=\spec_{\text{p}}\left(H^{(1)}\right)=
  \left(\frac{1-\sqrt{1-4k^{2}}}{2k^{2}}\right)^{\!\alpha}\!\left(\frac{1-2k^{2}-\sqrt{1-4k^{2}}}{2k^{2}}\right)^{\!\N_{0}}
  \]
  and
  \[
   \|H^{(1)}\|=\left(\frac{1-\sqrt{1-4k^{2}}}{2k^{2}}\right)^{\!\alpha}.
  \]
 \end{enumerate}
\end{thm}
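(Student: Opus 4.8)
The plan is to run the commutator method: realize $\mathcal{H}^{(1)}$ as a function of an explicitly diagonalizable Jacobi matrix $\mathcal{J}$ lying in its commutant. Writing $\mathcal{H}^{(1)}_{m,n}=w_{m}w_{n}h_{m+n}$ with $w_{n}=k^{n}/\sqrt{n!\,\Gamma(n+\alpha)}$ and $h_{s}=\Gamma(s+\alpha)$, I would impose the formal commutation $[\mathcal{J},\mathcal{H}^{(1)}]=0$ and solve the resulting three-term functional equation (equivalently, apply Theorem~\ref{thm:commut_matr_Jac}). The square-root weights are designed to cancel the awkward factors $\sqrt{(m+1)(m+\alpha)}$ appearing when one divides the commutator relation by $\mathcal{H}^{(1)}_{m,n}$; using $n(n+\alpha-1)-m(m+\alpha-1)=(n-m)(m+n+\alpha-1)$ the relation collapses to $b_{m}-b_{n}=(A/k)(n-m)$, and one reads off that the admissible Jacobi matrices are exactly those with $a_{n}=A\sqrt{(n+1)(n+\alpha)}$ and $b_{n}=C-(A/k)\,n$, with $A\neq0$ and $C$ free (and $a_{-1}=0$ automatic).

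The decisive structural point is that the spectral type of $\mathcal{J}$ transitions precisely at $k=1/2$: since $a_{n}\sim A n$ and $b_{n}\sim-(A/k)n$, the ratio $|b_{n}|/(2a_{n})\to 1/(2k)$, which is $<1$, $=1$, or $>1$ according as $k>1/2$, $k=1/2$, or $k<1/2$. Fixing $A,C$ and rescaling the spectral variable, I would then identify $\mathcal{J}$ with the Jacobi matrix of the rescaled Meixner--Pollaczek polynomials for $k>1/2$ (taking $\lambda=\alpha/2$, $\cos\phi=1/(2k)$, spectral variable $x/\sqrt{4k^{2}-1}$, which is legitimate since $1/(2k)\le1$), with Laguerre for $k=1/2$, and with Meixner for $k<1/2$ (where $\sqrt{4k^{2}-1}$ becomes $\sqrt{1-4k^{2}}$ and $1/(2k)>1$ drives the discrete regime). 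In each case the normalization $\sqrt{\Gamma(\alpha)n!/\Gamma(n+\alpha)}=\sqrt{n!/(\alpha)_{n}}$ shows that the $\hat{P}_{n}$ in the statement are the corresponding orthonormal polynomials, the associated $\mu$ is the standard orthogonality measure, and the moment problem is determinate, so $\mathcal{J}$ is self-adjoint with simple spectrum $\supp\mu$ and $U$ of~\eqref{eq:def_U} realizes it as multiplication by $x$.

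With $\mathcal{J}$ diagonalized, I would invoke Theorem~\ref{thm:def_H}: because $k<1$ makes $\sum_{n}|\mathcal{H}^{(1)}_{m,n}|^{2}$ converge (it is dominated by $k^{2n}$ times a power of $n$), the rows and columns lie in $\ell^{2}(\N_{0})$ and $\mathcal{H}^{(1)}$ determines a unique self-adjoint $H^{(1)}$ still commuting with $\mathcal{J}$. Simplicity of the spectrum of $\mathcal{J}$ forces $H^{(1)}$ to be a function of $\mathcal{J}$, whence $UH^{(1)}U^{-1}=T_{h}$, and $h$ is pinned down by the row $m=0$ of the intertwining relation $h\,\hat{P}_{0}=\sum_{n}\mathcal{H}^{(1)}_{0,n}\hat{P}_{n}$. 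The normalizations are engineered so that $\mathcal{H}^{(1)}_{0,n}\hat{P}_{n}=k^{n}P_{n}^{(\alpha/2)}(\,\cdot\,)$ and $\hat{P}_{0}\equiv1$; hence $h$ is the value at $t=k$ of the Meixner--Pollaczek generating function $\sum_{n}P_{n}^{(\lambda)}(x;\phi)t^{n}=(1-e^{\ii\phi}t)^{-\lambda+\ii x}(1-e^{-\ii\phi}t)^{-\lambda-\ii x}$ (and its Laguerre/Meixner analogues). Using $1-ke^{\pm\ii\phi}=ke^{\mp\ii\phi}$ one simplifies this to the closed forms in the statement, e.g. $h(x)=k^{-\alpha}\exp\!\big(\tfrac{2x}{\sqrt{4k^{2}-1}}\arccos\tfrac{1}{2k}\big)$ for $k>1/2$.

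The spectral conclusions then follow by reading off the range of $h$ over $\supp\mu$: for $k>1/2$, $h$ is a continuous strictly monotone bijection of $\R$ onto $(0,\infty)$ with $\mu$ absolutely continuous, giving purely absolutely continuous spectrum $[0,\infty)$; for $k=1/2$, $h(x)=2^{\alpha}e^{-2x}$ maps $[0,\infty)$ onto $(0,2^{\alpha}]$, so $\spec(H^{(1)})=[0,2^{\alpha}]$; for $k<1/2$, $\mu$ is discrete on $\sqrt{1-4k^{2}}\,\N_{0}$, so $H^{(1)}$ has pure point spectrum given by the geometric sequence $h(\sqrt{1-4k^{2}}\,\N_{0})$, which decays geometrically and hence is trace class. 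I expect the main obstacle to be the rigorous operator-theoretic step, valid in the unbounded cases $k\ge1/2$, that formal commutation promotes to $H^{(1)}$ being a genuine function of the self-adjoint $\mathcal{J}$, together with the $L^{2}(\R,\dd\mu)$-convergence of $\sum_{n}\mathcal{H}^{(1)}_{0,n}\hat{P}_{n}$ needed to identify $h$ with the elementary closed form rather than a mere formal sum; this is exactly what Theorem~\ref{thm:def_H} is meant to supply, and the radius of convergence $|t|<1$ of the generating function matches the standing assumption $k\in(0,1)$.
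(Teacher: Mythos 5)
Your proposal follows the paper's own route almost step for step: the same commutant computation (up to the irrelevant affine normalization $A$, $C$; the paper takes $b_{n}=n$, $a_{n}=-k\sqrt{(n+1)(n+\alpha)}$), the same identification with Meixner--Pollaczek, Laguerre and Meixner polynomials under $\alpha=2\lambda$, $k=1/(2\cos\phi)$, resp.\ $k=\sqrt{c}/(1+c)$, and the same evaluation of $h$ from the zeroth row via the generating functions at $t=k$. The skeleton is therefore correct.

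The genuine gap is the properness step. You assert that since $k<1$ the columns of $\mathcal{H}^{(1)}$ are square-summable and therefore ``$\mathcal{H}^{(1)}$ determines a unique self-adjoint $H^{(1)}$''. That implication is false in general, and it is not what Theorem~\ref{thm:def_H} says: columns in $\ell^{2}(\N_{0})$ is merely a standing hypothesis of that theorem, whose conclusion is that $\mathcal{H}^{(1)}$ is proper \emph{if and only if} $\C[x]$ is dense in $L^{2}(\R,(|h|+1)^{2}\dd\mu)$. This condition must actually be verified, and in case (i) --- the only case where $h$ is unbounded, hence the only case where uniqueness of the self-adjoint realization is genuinely at stake --- this verification is the one analytically nontrivial point of the whole proof. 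The paper does it through the sufficient condition~\eqref{eq:int_cond_H_proper}: using $|\Gamma(\lambda+\ii y)|^{2}=2\pi|y|^{2\lambda-1}e^{-\pi|y|}\left(1+o(1)\right)$ one obtains, for $|x|$ large,
\[
 (|h(x)|+1)^{2}\,\frac{\dd\mu}{\dd x}(x)\leq C_{\lambda,\phi}\,|x|^{2\lambda-1}e^{2|x|(3\phi-\pi)\cot(\phi)},
\]
which decays exponentially precisely because $\phi\in(0,\pi/3)$, i.e.\ $k\in(1/2,1)$. So the hypothesis $k<1$ enters crucially here, not only through the radius of convergence of the generating function as you suggest. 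Relatedly, your fallback argument that ``simplicity of the spectrum of $\mathcal{J}$ forces $H^{(1)}$ to be a function of $\mathcal{J}$'' is a legitimate general principle only for \emph{bounded} $H^{(1)}$; the paper invokes it (with references) only in the trace-class case (iii). For unbounded $H^{(1)}$, formal commutation of matrices does not by itself yield commutation of self-adjoint operators, and Theorem~\ref{thm:def_H}, with its density hypothesis actually checked, is exactly what replaces that shortcut. (A minor slip in the same spirit: you call $k\geq 1/2$ the ``unbounded cases'', but for $k=1/2$ the operator $H^{(1)}$ is bounded with norm $2^{\alpha}$; only the Jacobi operator is unbounded there, and cases (ii), (iii) are unproblematic since $h$ is bounded and~\eqref{eq:int_cond_H_proper} is immediate.)
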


The second weighted Hankel matrix $\mathcal{H}^{(2)}$ is given by formulas
\begin{equation}
 \mathcal{H}_{m,n}^{(2)}=(-1)^{m(m-1)/2+n(n-1)/2}\sqrt{\frac{\Gamma(m+2\lambda)\Gamma(n+2\lambda)}{m!n!}}
 \frac{\Gamma\left((m+n+1)/2\right)}{\Gamma\left(2\lambda+(m+n+1)/2\right)}, 
\label{eq:def_hank2}
\end{equation}
if $m+n$ is even and $\mathcal{H}_{m,n}^{(2)}=0$, if $m+n$ is odd. The parameter $\lambda$ is assumed to be positive. The diagonalization is carried out with the aid of a particular case of the Meixner--Pollaczek polynomials~\eqref{eq:def_meix-polllac}.

\begin{thm}\label{thm:meixpol}
 For $\lambda>0$, the matrix $\mathcal{H}^{(2)}$ given by~\eqref{eq:def_hank2} determines a unique self-adjoint operator $H^{(2)}$ on $\ell^{2}(\N_{0})$
 which is unitarily equivalent via $U$ given by~\eqref{eq:def_U} to the operator of multiplication by $h$ on $L^{2}(\R,\dd\mu)$, where $\mu$ is absolutely continuous
 supported on $\R$ with the density
  \[
   \frac{\dd\mu}{\dd x}(x)=\frac{2^{2\lambda-1}}{\pi\Gamma(2\lambda)}\left|\Gamma(\lambda+\ii x)\right|^{2},
  \]
  \[
  h(x)=\frac{2^{2\lambda-1}}{\Gamma(2\lambda)}|\Gamma(\lambda+\ii x)|^{2},
  \]
  and
  \[
   \hat{P}_{n}(x)=\left(\frac{\Gamma(2\lambda)\,n!}{\Gamma(n+2\lambda)}\right)^{\!1/2}P_{n}^{(\lambda)}\left(x;\frac{\pi}{2}\right)\!,
  \]
  for $x\in\R$. Consequently, $H^{(2)}$ is a bounded operator with
  \[
   \spec\left(H^{(2)}\right)=\spec_{\text{ac}}\left(H^{(2)}\right)=\Bigg[0,\frac{\sqrt{\pi}\,\Gamma(\lambda)}{\Gamma\left(\lambda+1/2\right)}\Bigg] \quad \mbox{ and } 
   \quad \|H^{(2)}\|=\frac{\sqrt{\pi}\,\Gamma(\lambda)}{\Gamma\left(\lambda+1/2\right)}.
  \]
\end{thm}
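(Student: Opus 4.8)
The plan is to place $\mathcal{H}^{(2)}$ in the commutant of the Jacobi matrix $\mathcal{J}$ attached to the Meixner--Pollaczek polynomials $P_n^{(\lambda)}(\,\cdot\,;\pi/2)$ and then to transport everything through the explicit diagonalization of $\mathcal{J}$. Putting $\phi=\pi/2$ in the three-term recurrence for $P_n^{(\lambda)}(x;\phi)$ makes the diagonal vanish, since $\cot(\pi/2)=0$, and after renormalizing to the orthonormal polynomials $\hat{P}_n$ of the statement one reads off the Jacobi parameters $b_n=0$ and $a_n=\tfrac12\sqrt{(n+1)(n+2\lambda)}$. Thus $b_n$ and $a_n^2$ are polynomials in $n$, $\mathcal{J}$ belongs to the class studied here, and the unitary $U\colon e_n\mapsto\hat{P}_n$ of~\eqref{eq:def_U} carries $\mathcal{J}$ to multiplication by $x$ on $L^2(\R,\dd\mu)$, with $\mu$ the (determinate) orthogonality measure displayed above and with the cyclic vector $Ue_0=\hat{P}_0\equiv1$.

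First I would check that $\mathcal{H}^{(2)}$ lies in the commutant of $\mathcal{J}$. By Theorem~\ref{thm:commut_matr_Jac} this amounts to verifying the off-diagonal recurrence $a_{m-1}\mathcal{H}^{(2)}_{m-1,n}+a_m\mathcal{H}^{(2)}_{m+1,n}=a_{n-1}\mathcal{H}^{(2)}_{m,n-1}+a_n\mathcal{H}^{(2)}_{m,n+1}$; because $\mathcal{H}^{(2)}_{m,n}$ vanishes unless $m+n$ is even, both sides are automatically zero when $m+n$ is even, so the only genuine identities, occurring for $m+n$ odd, reduce to an elementary relation among ratios of Gamma functions. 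Square-summability of the columns of $\mathcal{H}^{(2)}$ together with the determinacy of $\mu$ then lets me invoke Theorem~\ref{thm:def_H} to conclude that $\mathcal{H}^{(2)}$ determines a unique self-adjoint operator $H^{(2)}$. Since $H^{(2)}$ commutes with $\mathcal{J}$ and $\mathcal{J}$ has simple spectrum with cyclic vector $e_0$, the operator $UH^{(2)}U^{-1}$ commutes with multiplication by $x$ and is therefore itself multiplication by a real function $h$, which is pinned down by $h=UH^{(2)}e_0=\sum_{n=0}^{\infty}\mathcal{H}^{(2)}_{n,0}\hat{P}_n$.

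The step I expect to be the main obstacle is the explicit summation of this series. Only even indices $n=2j$ survive; there the sign collapses to $(-1)^j$ and the Gamma ratio becomes $\Gamma(j+\tfrac12)/\Gamma(2\lambda+j+\tfrac12)$, which I would rewrite as the Beta integral $\Gamma(2\lambda)^{-1}\int_0^1 t^{j-1/2}(1-t)^{2\lambda-1}\,\dd t$. Interchanging summation and integration and resumming the even part of the Meixner--Pollaczek generating function $\sum_{n\ge0}P_n^{(\lambda)}(x;\pi/2)\,s^n=(1-\ii s)^{-\lambda+\ii x}(1+\ii s)^{-\lambda-\ii x}$ at $s=\ii\sqrt{t}$ yields the closed form $(1-t)^{-\lambda}\cos\!\big(x\ln\tfrac{1+\sqrt{t}}{1-\sqrt{t}}\big)$ for the inner sum. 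The substitution $\sqrt{t}=\tanh(u/2)$ then turns the remaining integral into the Fourier--cosine transform $\int_0^{\infty}\cosh^{-2\lambda}(u/2)\cos(xu)\,\dd u$, whose classical value $\frac{2^{2\lambda-1}}{\Gamma(2\lambda)}|\Gamma(\lambda+\ii x)|^2$ is precisely the announced multiplier $h$.

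Finally, the spectral statements follow directly from the shape of $h$. It is continuous, even, strictly decreasing in $|x|$, with supremum $h(0)=2^{2\lambda-1}\Gamma(\lambda)^2/\Gamma(2\lambda)$ and infimum $0$ attained only as $|x|\to\infty$; the duplication formula rewrites $h(0)$ as $\sqrt{\pi}\,\Gamma(\lambda)/\Gamma(\lambda+1/2)$. Hence $H^{(2)}$ is bounded with $\|H^{(2)}\|=h(0)$, and because $\mu$ is purely absolutely continuous while $h$ is continuous and non-constant, $\spec(H^{(2)})=\spec_{\text{ac}}(H^{(2)})=[0,h(0)]$ with no eigenvalues, as claimed.
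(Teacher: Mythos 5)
Your route coincides with the paper's own proof in all essentials: the same Jacobi matrix ($b_n=0$, $a_n=\tfrac12\sqrt{(n+1)(n+2\lambda)}$), the same identification $h=U\mathcal{H}^{(2)}e_0=\sum_n\mathcal{H}^{(2)}_{n,0}\hat{P}_n$, and your evaluation of that series -- Beta integral for the Gamma ratio, even part of the generating function, hyperbolic substitution leading to $\int_0^{\infty}\cosh^{-2\lambda}(u/2)\cos(xu)\,\dd u$ -- is, up to reparametrization, exactly the paper's Lemma~\ref{lem:h_hank2}. Two minor remarks. First, the entrywise identity you check for commutation is simply the definition of $\mathcal{J}\mathcal{H}^{(2)}=\mathcal{H}^{(2)}\mathcal{J}$ (i.e.\ \eqref{eq:comm_rel_HJ=JH} with $b_m=b_n=0$); it is not supplied by Theorem~\ref{thm:commut_matr_Jac}, which describes the commutant as polynomials in $\mathcal{J}$. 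Second, the interchange of summation and integration in your computation of $h$ requires justification, since the generating function is being used up to the boundary of its disc of convergence; the paper runs Fubini using the bound $|P_n^{(\lambda)}(x;\pi/2)|\le C_{\lambda,x}\,n^{\lambda-1}$ together with $\Gamma(n+1/2)/\Gamma(n+2\lambda+1/2)=O(n^{-2\lambda})$.

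The one genuine gap is the properness step. You invoke Theorem~\ref{thm:def_H} on the strength of ``square-summability of the columns plus determinacy of $\mu$'', but the criterion in that theorem is density of $\C[x]$ in the \emph{weighted} space $L^{2}(\R,(|h|+1)^{2}\dd\mu)$, a condition that involves $h$ itself; determinacy of $\mu$ only yields density in $L^{2}(\R,\dd\mu)$ (Riesz's theorem), which is weaker in general -- this is precisely the subtlety the weighted space is there to capture, and as written you apply the theorem before anything about $h$ is known. The gap is repairable with material you develop anyway: first compute $h$ as the pointwise sum (this uses only that the columns of $\mathcal{H}^{(2)}$ lie in $\ell^{2}(\N_0)$), obtain the closed form and note that $h$ is bounded; then $(|h|+1)^{2}\dd\mu\le C\dd\mu$, so density of $\C[x]$ in $L^{2}(\R,\dd\mu)$ transfers to $L^{2}(\R,(|h|+1)^{2}\dd\mu)$, and Theorem~\ref{thm:def_H} applies. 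Alternatively, do what the paper does: verify the exponential condition \eqref{eq:int_cond_H_proper} directly from the asymptotics \eqref{eq:gamma_asympt_imag} of $|\Gamma(\lambda+\ii x)|^{2}$, which decouples the properness check from the order in which $h$ is computed.
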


Since $\mathcal{H}_{m,n}^{(2)}=0$, if $m+n$ odd, we get a corollary on the spectral representation of operators given by weighted Hankel matrices
\begin{equation}
 \mathcal{H}_{m,n}^{(2\text{e})}:=\mathcal{H}_{2m,2n}^{(2)}=(-1)^{m+n}\sqrt{\frac{\Gamma(2m+2\lambda)\Gamma(2n+2\lambda)}{(2m)!(2n)!}}
 \frac{\Gamma\left(m+n+1/2\right)}{\Gamma\left(2\lambda+m+n+1/2\right)}
\label{eq:def_hank2e}
\end{equation}
and
\begin{equation}
 \mathcal{H}_{m,n}^{(2\text{o})}:=\mathcal{H}_{2m+1,2n+1}^{(2)}=(-1)^{m+n}\sqrt{\frac{\Gamma(2m+1+2\lambda)\Gamma(2n+1+2\lambda)}{(2m+1)!(2n+1)!}}
 \frac{\Gamma\left(m+n+3/2\right)}{\Gamma\left(2\lambda+m+n+3/2\right)},
\label{eq:def_hank2o}
\end{equation}
where $m,n\in\N_{0}$ and $\lambda>0$.

\begin{cor}\label{cor:hank2eo}
  For $\lambda>0$, both matrices $\mathcal{H}^{(2\text{e})}$ and $\mathcal{H}^{(2\text{o})}$ given by~\eqref{eq:def_hank2e} and~\eqref{eq:def_hank2o} determine 
  unique self-adjoint operators $H^{(2e)}$ and $H^{(2o)}$ on $\ell^{2}(\N_{0})$ which are unitarily equivalent via mappings $U_{e}$ and $U_{o}$ to the multiplication operator by function~$h$ on $L^{2}((0,\infty),\dd\mu)$, respectively. Here $h$ and $\mu$ are given by the same formulas as in Theorem~\ref{thm:meixpol}, while the unitary operators are determined by their values on the standard basis as 
  \[
   U_{e}:\ell^{2}(\N_{0})\to L^{2}((0,\infty),\dd\mu): e_{n}\mapsto\sqrt{2}\hat{P}_{2n}
  \]
  and
  \[
   U_{o}:\ell^{2}(\N_{0})\to L^{2}((0,\infty),\dd\mu): e_{n}\mapsto\sqrt{2}\hat{P}_{2n+1},
  \]
  where $\hat{P}_{n}$ is again the same as in Theorem~\ref{thm:meixpol}.
\end{cor}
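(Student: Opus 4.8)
The plan is to deduce the corollary from Theorem~\ref{thm:meixpol} by exploiting the parity structure hidden in $\mathcal{H}^{(2)}$. Since $\mathcal{H}^{(2)}_{m,n}=0$ whenever $m+n$ is odd, the operator $H^{(2)}$ maps the closed subspace $\mathscr{H}_{e}:=\overline{\spn}\{e_{2n}\mid n\in\N_{0}\}$ into itself and likewise for $\mathscr{H}_{o}:=\overline{\spn}\{e_{2n+1}\mid n\in\N_{0}\}$. The cleanest way to see that these are genuinely reducing subspaces, and not merely invariant at the level of formal matrices, is to transport the question to $L^{2}(\R,\dd\mu)$ through the unitary $U$ of Theorem~\ref{thm:meixpol}. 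First I would record that the phase $\phi=\pi/2$ kills the diagonal of the underlying Jacobi matrix, since its diagonal entries are proportional to $\cos\phi$; equivalently, the three-term recurrence for $P_n^{(\lambda)}(\cdot;\pi/2)$ is symmetric in $x$, so that $\hat{P}_{n}(-x)=(-1)^{n}\hat{P}_{n}(x)$. Hence $U$ carries $\mathscr{H}_{e}$ onto the even functions and $\mathscr{H}_{o}$ onto the odd functions in $L^{2}(\R,\dd\mu)$.

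Next I would note that the measure $\mu$ is symmetric about the origin and the multiplier $h$ is even, both being built from $|\Gamma(\lambda+\ii x)|^{2}$. Consequently the multiplication operator $T_{h}$ leaves the even and odd subspaces of $L^{2}(\R,\dd\mu)$ invariant, and pulling this back through $U$ shows that $H^{(2)}$ is reduced by $\ell^{2}(\N_{0})=\mathscr{H}_{e}\oplus\mathscr{H}_{o}$. Because $H^{(2)}$ is self-adjoint by Theorem~\ref{thm:meixpol} and both summands reduce it, each restriction is a self-adjoint operator whose matrix in the basis $\{e_{2n}\}$ (resp.\ $\{e_{2n+1}\}$) is exactly $\mathcal{H}^{(2\text{e})}$ (resp.\ $\mathcal{H}^{(2\text{o})}$) by the definitions~\eqref{eq:def_hank2e} and~\eqref{eq:def_hank2o}; this settles the claim that these matrices determine unique self-adjoint operators $H^{(2e)}$ and $H^{(2o)}$.

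It remains to fold $L^{2}(\R,\dd\mu)$ onto $L^{2}((0,\infty),\dd\mu)$. Since $\mu$ is absolutely continuous there is no atom at the origin, so for any even (or odd) $f$ one has $\int_{\R}|f|^{2}\dd\mu=2\int_{0}^{\infty}|f|^{2}\dd\mu$; hence $f\mapsto\sqrt{2}\,f|_{(0,\infty)}$ defines a unitary map $V_{e}$ from the even subspace onto $L^{2}((0,\infty),\dd\mu)$, and similarly a unitary $V_{o}$ from the odd subspace. Under the natural identification $e_{n}\leftrightarrow e_{2n}$ (resp.\ $e_{n}\leftrightarrow e_{2n+1}$) intertwining $H^{(2e)}$ with $H^{(2)}|_{\mathscr{H}_{e}}$ (resp.\ $H^{(2o)}$ with $H^{(2)}|_{\mathscr{H}_{o}}$), the composition $V_{e}U$ reproduces the prescription $e_{n}\mapsto\sqrt{2}\,\hat{P}_{2n}$, and $V_{o}U$ gives $e_{n}\mapsto\sqrt{2}\,\hat{P}_{2n+1}$, matching $U_{e}$ and $U_{o}$. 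Finally, since multiplication by $h$ commutes with both restriction and the constant factor $\sqrt{2}$, the operator $V_{e}T_{h}V_{e}^{-1}$ is multiplication by $h|_{(0,\infty)}$ on $L^{2}((0,\infty),\dd\mu)$, and the same holds in the odd sector; combining these equivalences yields $U_{e}H^{(2e)}U_{e}^{-1}=T_{h}=U_{o}H^{(2o)}U_{o}^{-1}$, as asserted.

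I would expect no serious obstacle here: the only points requiring genuine care are the parity of the Meixner--Pollaczek polynomials at $\phi=\pi/2$ and the bookkeeping of the normalizing factor $\sqrt{2}$ that makes the folding map an isometry. Both become routine once the symmetry of $\mu$ and the evenness of $h$ are in hand.
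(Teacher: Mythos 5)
Your proof is correct and follows essentially the same route as the paper: decompose $\ell^{2}(\N_{0})$ into the even- and odd-index subspaces, use the parity $\hat{P}_{n}(-x)=(-1)^{n}\hat{P}_{n}(x)$ to identify their images under $U$ with the even and odd functions in $L^{2}(\R,\dd\mu)$, and fold onto $L^{2}((0,\infty),\dd\mu)$ with the $\sqrt{2}$-normalized restriction map. The only difference is that you spell out details the paper leaves implicit (the symmetry of $\mu$ and $h$, and the reducing rather than merely invariant character of the subspaces), which is fine.
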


\begin{rem}
 If desirable, the alternating terms in~\eqref{eq:def_hank2e} and~\eqref{eq:def_hank2o} can be omitted since two operators with matrix entries
 $\mathcal{A}_{m,n}$ and $(-1)^{m+n}\mathcal{A}_{m,n}$ are unitarily equivalent via $U:e_{n}\mapsto(-1)^{n}e_{n}$, $n\in\N_{0}$. Further, note that, if $\lambda=1/2$,
 the matrix $\mathcal{H}_{m,n}^{(2\text{e})}$ is a Hankel matrix. In fact, Corollary~\ref{cor:hank2eo} implies that the operator given by the Hankel matrix
 with elements
 \[
  (-1)^{m+n}\mathcal{H}_{m,n}^{(2\text{e})}=\frac{1}{m+n+1/2}, \quad m,n\in\N_{0},
 \]
 has absolutely continuous spectrum equal to $[0,\pi]$. This is a well-know result for the particular case of the generalized Hilbert matrix, see~\cite{rosenblum2} or~\cite{kalvoda-stovicek_lma16}.
 Moreover, the formula for $h$ can be expressed as
 \[
  h(x)=\frac{\pi}{\cosh(\pi x)},
 \]
 which follows from the identity~\cite[Eq.~5.4.4]{dlmf}
 \[
  \left|\Gamma\left(\frac{1}{2}+\ii x\right)\right|^{2}=\frac{\pi}{\cosh(\pi x)}.
 \]
\end{rem}

The third weighted Hankel matrix has elements
\begin{equation}
 \mathcal{H}_{m,n}^{(3)}=(-1)^{m(m-1)/2+n(n-1)/2}\frac{1}{\sqrt{m!n!}}\,\Gamma\!\left(\frac{m+n+1}{2}\right)\!,
\label{eq:def_hank3}
\end{equation}
if $m+n$ is even and $\mathcal{H}_{m,n}^{(3)}=0$, if $m+n$ is odd. An essential role in the diagonalization of the respective operator is played by the Hermite polynomials~\cite[Eq.~9.15.1]{koekoek-etal_10}
\[
 H_{n}(x)=(2x)^{n}\pFq{2}{0}{-n/2,-(n-1)/2}{-}{-\frac{1}{x^{2}}}.
\]

\begin{thm}\label{thm:hermite}
 The matrix $\mathcal{H}^{(3)}$ given by~\eqref{eq:def_hank3} determines a unique self-adjoint operator $H^{(3)}$ on $\ell^{2}(\N_{0})$
 which is unitarily equivalent via $U$ given by~\eqref{eq:def_U} to the operator of multiplication by $h$ on $L^{2}(\R,\dd\mu)$, where $\mu$ is absolutely continuous supported on $\R$ with the density
  \[
   \frac{\dd\mu}{\dd x}(x)=\frac{1}{\sqrt{\pi}}e^{-x^{2}},
  \]
  \[
  h(x)=\sqrt{2\pi}e^{-x^{2}},
   \]
  and
  \[
   \hat{P}_{n}(x)=\frac{1}{\sqrt{2^{n}n!}}H_{n}(x),
  \]
  for $x\in\R$. Consequently, $H^{(3)}$ is a bounded operator with
  \[
   \spec\left(H^{(3)}\right)=\spec_{\text{ac}}\left(H^{(3)}\right)=[\,0,\sqrt{2\pi}\,] \quad \mbox{ and } \quad \|H^{(3)}\|=\sqrt{2\pi}\,.
  \]
\end{thm}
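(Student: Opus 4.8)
\emph{The plan} is to cast $\mathcal{H}^{(3)}$ into the commutator-method framework, with the companion Jacobi matrix taken to be the one associated with the orthonormal Hermite polynomials $\hat{P}_{n}(x)=H_{n}(x)/\sqrt{2^{n}n!}$. These are orthonormal with respect to $\dd\mu(x)=\pi^{-1/2}e^{-x^{2}}\dd x$ and satisfy the symmetric three-term recurrence with $b_{n}=0$ and $a_{n}^{2}=(n+1)/2$; in particular $b_{n}$ and $a_{n}^{2}$ are polynomials in $n$, so this $\mathcal{J}$ lies in the setting of the paper. I would also record at the outset that $\hat{P}_{0}=1$, which serves as the cyclic vector below.

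First I would verify that $\mathcal{H}^{(3)}$ belongs to the commutant of $\mathcal{J}$, i.e. that $a_{m-1}\mathcal{H}^{(3)}_{m-1,n}+a_{m}\mathcal{H}^{(3)}_{m+1,n}=a_{n-1}\mathcal{H}^{(3)}_{m,n-1}+a_{n}\mathcal{H}^{(3)}_{m,n+1}$. Because of the checkerboard structure of $\mathcal{H}^{(3)}$, this is trivial when $m+n$ is even (all four entries vanish). When $m+n=2p+1$ is odd, all four entries are nonzero; after factoring out the common normalization $1/\sqrt{m!\,n!}$ and the sign factors, and using that $m$ and $n$ then have opposite parity, the relation collapses to the single scalar identity $(2p+1)\Gamma(p+1/2)=2\Gamma(p+3/2)$, which is precisely the functional equation $\Gamma(p+3/2)=(p+1/2)\Gamma(p+1/2)$. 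This places $\mathcal{H}^{(3)}$ in the commutant described by Theorem~\ref{thm:commut_matr_Jac}. A routine Stirling estimate applied to $\mathcal{H}^{(3)}_{0,2k}=(-1)^{k}\sqrt{\pi}\,\sqrt{(2k)!}/(4^{k}k!)\sim c\,k^{-1/4}2^{-k}$ shows the columns lie in $\ell^{2}(\N_{0})$, so Theorem~\ref{thm:def_H} yields a unique self-adjoint operator $H^{(3)}$.

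Next, the commutator method applies: under $U:e_{n}\mapsto\hat{P}_{n}$ the matrix $\mathcal{J}$ becomes multiplication by $x$ on $L^{2}(\R,\dd\mu)$, whose spectrum is simple with cyclic vector $\hat{P}_{0}=1$, so any operator commuting with it, in particular $UH^{(3)}U^{-1}$, must be a multiplication operator $T_{h}$. To identify $h$ I would evaluate on the cyclic vector: since $U^{-1}\hat{P}_{0}=e_{0}$ and $\mathcal{H}^{(3)}$ is symmetric, $h=\sum_{n}\mathcal{H}^{(3)}_{0,n}\hat{P}_{n}$. Retaining only the even indices and substituting $\Gamma(k+1/2)=\sqrt{\pi}\,(2k)!/(4^{k}k!)$ reduces this to $h(x)=\sqrt{\pi}\sum_{k\ge0}\frac{(-1)^{k}}{8^{k}k!}H_{2k}(x)$. \emph{The main obstacle} is summing this series; here I would invoke the generating function $\sum_{k\ge0}\frac{s^{k}}{k!}H_{2k}(x)=(1+4s)^{-1/2}\exp\!\big(4sx^{2}/(1+4s)\big)$, itself a consequence of the Laguerre generating function together with $H_{2k}(x)=(-4)^{k}k!\,L_{k}^{(-1/2)}(x^{2})$, and specialize to $s=-1/8$ (so that $1+4s=1/2$ and $4sx^{2}/(1+4s)=-x^{2}$), obtaining $h(x)=\sqrt{2\pi}\,e^{-x^{2}}$.

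Finally I would read off the spectrum. The function $h(x)=\sqrt{2\pi}\,e^{-x^{2}}$ is continuous and strictly positive on $\R$, attains its maximum $\sqrt{2\pi}$ at $x=0$, and decays to $0$ as $|x|\to\infty$, so its range is $(0,\sqrt{2\pi}\,]$. Since $\mu$ is absolutely continuous with full support on $\R$, the essential range of $h$ is its closure $[0,\sqrt{2\pi}\,]$ and the spectral measure of $T_{h}$ is purely absolutely continuous. Hence $\spec(H^{(3)})=\spec_{\text{ac}}(H^{(3)})=[\,0,\sqrt{2\pi}\,]$ and $\|H^{(3)}\|=\sqrt{2\pi}$, as claimed.
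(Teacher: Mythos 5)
Most of your proposal is sound and follows the paper's own route: you use the same Jacobi matrix ($b_{n}=0$, $a_{n}^{2}=(n+1)/2$), identify $h$ as the $U$-image of the zeroth column, sum the resulting series by a generating function, and read off the spectrum by spectral mapping. Your two variations are fine: verifying commutation directly by reducing it to $\Gamma(p+3/2)=(p+1/2)\Gamma(p+1/2)$ is equivalent to (and checks against) the paper's derivation, which instead solves the difference equation \eqref{eq:commut_eq_reform} with $\kappa_{n}=(-1)^{n}/\sqrt{2}$ for $h_{\ell}$ and $w_{n}$; and your Laguerre-based generating function $\sum_{k}s^{k}H_{2k}(x)/k!=(1+4s)^{-1/2}\exp\bigl(4sx^{2}/(1+4s)\bigr)$ at $s=-1/8$ gives the same value $\sqrt{2\pi}\,e^{-x^{2}}$ as the paper's identity $\sum_{n}t^{n}H_{n}(x)/[n/2]!$ combined with symmetrization.

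There is, however, a genuine gap in the operator-theoretic middle of your argument. After the column estimate you assert that ``Theorem~\ref{thm:def_H} yields a unique self-adjoint operator $H^{(3)}$.'' That is not what the theorem delivers: under its hypotheses (proper $\mathcal{J}$, real columns in $\ell^{2}$, formal commutation) it gives only an \emph{equivalence} --- $\mathcal{H}$ is proper if and only if $\C[x]$ is dense in $L^{2}(\R,(|h|+1)^{2}\dd\mu)$ with $h:=U\mathcal{H}e_{0}$ --- so properness is exactly what still has to be checked, and the check involves the function $h$ you only compute later. Relatedly, your appeal to ``any operator commuting with multiplication by $x$ on a space with simple spectrum is a multiplication operator'' presupposes that the self-adjoint operator $H^{(3)}$ commutes with $J$ in the operator (resolvent/spectral) sense; this does not follow from the formal matrix identity $\mathcal{J}\mathcal{H}=\mathcal{H}\mathcal{J}$, and bridging that gap is precisely the content of Theorem~\ref{thm:def_H}. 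The repair is short and is what the paper does: your series computation of $h$ stands on its own (it is the $L^{2}(\R,\dd\mu)$-sum of the zeroth column and uses no operator commutation); once $h(x)=\sqrt{2\pi}\,e^{-x^{2}}$ is known to be bounded and $\mu$ is Gaussian, the condition \eqref{eq:int_cond_H_proper} holds trivially, and the Corollary following Theorem~\ref{thm:def_H} then yields both properness (uniqueness of $H^{(3)}$) and $H^{(3)}=h(J)$, making the cyclicity argument unnecessary. Two smaller points: properness of $\mathcal{J}$ itself (via Carleman, $\sum_{n}1/a_{n}=\infty$) is a hypothesis of the theorem and should be recorded, and square-summability must be checked for every column, not just $\mathcal{H}^{(3)}_{0,\cdot}$ (the same Stirling estimate works, as in the paper, but for fixed $n$ the entries grow like $m^{n/2-1/4}2^{-m/2}$, so the statement is not literally the $n=0$ case).
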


Similarly as in Corollary~\ref{cor:hank2eo} we can use the result of Theorem~\ref{thm:hermite} to obtain the spectral representation of the operators with matrix entries 
\begin{equation}
 \mathcal{H}_{m,n}^{(3\text{e})}:=\mathcal{H}_{2m,2n}^{(3)}=(-1)^{m+n}\frac{\Gamma(m+n+1/2)}{\sqrt{(2m)!(2n)!}}
\label{eq:def_hank3e}
\end{equation}
and
\begin{equation}
 \mathcal{H}_{m,n}^{(3\text{o})}:=\mathcal{H}_{2m+1,2n+1}^{(3)}=(-1)^{m+n}\frac{\Gamma(m+n+3/2)}{\sqrt{(2m+1)!(2n+1)!}},
\label{eq:def_hank3o}
\end{equation}
where $m,n\in\N_{0}$.

\begin{cor}\label{cor:hank3eo}
  Matrices $\mathcal{H}^{(3\text{e})}$ and $\mathcal{H}^{(3\text{o})}$ given by~\eqref{eq:def_hank3e} and~\eqref{eq:def_hank3o} determine 
  unique self-adjoint operators $H^{(3e)}$ and $H^{(3o)}$ on $\ell^{2}(\N_{0})$ which are unitarily equivalent via mappings $U_{e}$ and $U_{o}$ to the multiplication operator by function~$h$ on $L^{2}((0,\infty),\dd\mu)$, respectively. The function $h$ as well as the measure $\mu$ are the same as in Theorem~\ref{thm:hermite}, while the unitary operators are given as
  \[
   U_{e}:\ell^{2}(\N_{0})\to L^{2}((0,\infty),\dd\mu): e_{n}\mapsto\sqrt{2}\hat{P}_{2n}
  \]
  and
  \[
   U_{o}:\ell^{2}(\N_{0})\to L^{2}((0,\infty),\dd\mu): e_{n}\mapsto\sqrt{2}\hat{P}_{2n+1},
  \]
  where $\hat{P}_{n}$ is again the same as in Theorem~\ref{thm:hermite}.
\end{cor}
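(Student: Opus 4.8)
The plan is to deduce the corollary directly from Theorem~\ref{thm:hermite} by exploiting the parity structure encoded in the vanishing of $\mathcal{H}^{(3)}_{m,n}$ for $m+n$ odd. First I would observe that this vanishing makes the two closed subspaces $\ell^{2}_{e}:=\overline{\spn}\{e_{2n}\mid n\in\N_{0}\}$ and $\ell^{2}_{o}:=\overline{\spn}\{e_{2n+1}\mid n\in\N_{0}\}$ invariant under $H^{(3)}$, and that the compressions of $H^{(3)}$ to these subspaces become, after the obvious reindexings $e_{2n}\leftrightarrow e_{n}$ and $e_{2n+1}\leftrightarrow e_{n}$, exactly the operators associated with $\mathcal{H}^{(3\text{e})}$ and $\mathcal{H}^{(3\text{o})}$. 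Since $H^{(3)}$ is bounded and self-adjoint by Theorem~\ref{thm:hermite}, these compressions are bounded self-adjoint operators, and the square-summability of the columns and rows of $\mathcal{H}^{(3\text{e})}$ and $\mathcal{H}^{(3\text{o})}$ is inherited from that of $\mathcal{H}^{(3)}$; this already settles the claim that both matrices determine unique self-adjoint operators.

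The key structural fact I would use is the parity of the Hermite functions: since $H_{n}(-x)=(-1)^{n}H_{n}(x)$, the orthonormal basis satisfies $\hat{P}_{n}(-x)=(-1)^{n}\hat{P}_{n}(x)$, while the measure $\dd\mu=\pi^{-1/2}e^{-x^{2}}\dd x$ and the multiplier $h(x)=\sqrt{2\pi}\,e^{-x^{2}}$ are both even. Introducing the reflection $(Rf)(x):=f(-x)$, which is a unitary involution on $L^{2}(\R,\dd\mu)$ with $R\hat{P}_{n}=(-1)^{n}\hat{P}_{n}$, we see that its $\pm1$ eigenspaces $L^{2}_{\mathrm{even}}$ and $L^{2}_{\mathrm{odd}}$ are precisely $\overline{\spn}\{\hat{P}_{2n}\}$ and $\overline{\spn}\{\hat{P}_{2n+1}\}$, so $\{\hat{P}_{2n}\}$ and $\{\hat{P}_{2n+1}\}$ are orthonormal bases of the even and odd subspaces. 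Multiplication by the even function $h$ commutes with $R$ and hence leaves each summand invariant. Under the unitary $U$ of Theorem~\ref{thm:hermite} the subspaces $\ell^{2}_{e}$ and $\ell^{2}_{o}$ are carried onto $L^{2}_{\mathrm{even}}$ and $L^{2}_{\mathrm{odd}}$, so it remains only to identify each parity subspace with $L^{2}((0,\infty),\dd\mu)$.

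For this last identification I would use the folding map. For a function $f$ of definite parity the symmetry of $\dd\mu$ gives $\|f\|^{2}_{L^{2}(\R,\dd\mu)}=2\|f\|^{2}_{L^{2}((0,\infty),\dd\mu)}$, so $f\mapsto\sqrt{2}\,f|_{(0,\infty)}$ is unitary from $L^{2}_{\mathrm{even}}$ (respectively $L^{2}_{\mathrm{odd}}$) onto $L^{2}((0,\infty),\dd\mu)$, and it intertwines multiplication by $h$ on $\R$ with multiplication by $h|_{(0,\infty)}$ because $h$ is even. Composing this folding map with $U$ and the reindexing sends $e_{n}\mapsto e_{2n}\mapsto\hat{P}_{2n}\mapsto\sqrt{2}\,\hat{P}_{2n}|_{(0,\infty)}$, which is precisely the map $U_{e}$ in the statement, and analogously for $U_{o}$; the resulting intertwining relations $U_{e}H^{(3e)}U_{e}^{-1}=T_{h}$ and $U_{o}H^{(3o)}U_{o}^{-1}=T_{h}$ are exactly the assertions. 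I do not anticipate a genuine obstacle, since all the analytic content already resides in Theorem~\ref{thm:hermite}; the only points requiring a little care are the correct bookkeeping of the $\sqrt{2}$ normalization produced by the folding map, and the verification that the even and odd Hermite functions exhaust the corresponding parity subspaces, which follows at once from the reflection operator $R$ together with the parity relation above.
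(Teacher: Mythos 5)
Your proposal is correct and follows essentially the same route as the paper: the paper proves Corollary~\ref{cor:hank2eo} by noting that the vanishing of the odd-parity entries makes the decomposition $\ell^{2}(\N_{0})=\ell^{2}(2\N_{0})\oplus\ell^{2}(2\N_{0}+1)$ invariant, that $U$ carries these summands onto the even and odd subspaces of $L^{2}(\R,\dd\mu)$ by the parity of the polynomials $\hat{P}_{n}$, and that each parity subspace is naturally unitarily equivalent to $L^{2}((0,\infty),\dd\mu)$, then declares the proof of Corollary~\ref{cor:hank3eo} completely analogous. Your write-up merely makes explicit the details the paper leaves implicit (the reflection involution, the $\sqrt{2}$ in the folding map, and the inheritance of boundedness and self-adjointness by the compressions).
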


The Askey scheme of hypergeometric orthogonal polynomials contains also families of polynomials whose measures of orthogonality are supported on finite sets which means that
the associated Jacobi matrices are finite. For two of these families -- Krawtchouk and dual Hahn polynomials -- the Jacobi parameters $b_{n}$ and $a_{n}^{2}$ are polynomials
in $n$. We found only rank-one weighted Hankel matrices (or their linear combinations) in the commutant of the Jacobi matrix associated with Krawtchouk polynomials. On the other
hand, the commutant of the Jacobi matrix corresponding to the dual Hahn polynomials contains interesting finite weighted Hankel matrices. Applying the commutator method, 
we diagonalize one of them, namely, the $(N+1)\times(N+1)$ matrix $H^{(4)}$ with entries
\begin{equation}
 H_{m,n}^{(4)}=\frac{(1+\gamma)_{m+n}(1+\delta)_{2N-m-n}}
 {\sqrt{m!n!(N-m)!(N-n)!(1+\gamma)_{m}(1+\gamma)_{n}(1+\delta)_{N-m}(1+\delta)_{N-n}}},
\label{eq:def_hank4}
\end{equation}
where $m,n\in\{0,1,\dots,N\}$, $N\in\N_{0}$, and  $\gamma,\delta>-1$. Here as well as further in the text,
$(a)_{n}:=a(a+1)\dots(a+n-1)$ stands for the Pochhammer symbol. Furthermore, we denote by $\delta_{x}$ the unit 
mass Dirac delta measure supported at a point $x$.

For the purpose of a definition of the diagonalizing unitary mapping $U$, we recall the dual Hahn polynomials~\cite[Eq.~9.6.1]{koekoek-etal_10}
\begin{equation}
 R_{n}\left(\lambda(x);\gamma,\delta,N\right)=\pFq{3}{2}{-n,-\!x,\;x+\gamma+\delta+1}{\gamma+1,\,-N}{1}, \quad n\in\{0,1,\dots,N\},
\label{eq:def_dualhahn}
\end{equation}
where $N\in\N_{0}$, $\gamma,\delta>-1$, and 
\begin{equation}
\lambda(x)=x(x+\gamma+\delta+1).
\label{eq:def_lam_dualhahn}
\end{equation}

\begin{thm}\label{thm:dual_hahn}
 Let $N\in\N_{0}$ and $\gamma,\delta>-1$. The matrix $H^{(4)}$ defined by~\eqref{eq:def_hank4} is unitarily equivalent via $U:\C^{N+1}\to L^{2}(\R,\dd\mu):e_{n}\mapsto\hat{P}_{n}$, $\forall n\in\{0,1,\dots,N\}$, to the operator of multiplication by $h$ on $L^{2}(\R,\dd\mu)$, 
 where $\mu$ is the finitely supported measure
 \[
  \mu=(1+\delta)_{N}N!\sum_{x=0}^{N}\frac{(2x+\gamma+\delta+1)(1+\gamma)_{x}}{(1+x+\gamma+\delta)_{N+1}(1+\delta)_{x}(N-x)!x!}\delta_{x},
 \]
 \[
  h(x)=\binom{2N+\gamma+\delta+1}{N-x},
 \]
 and
 \[
 \hat{P}_{n}(x)=\sqrt{\frac{N!(1+\gamma)_{n}(1+\delta)_{N-n}}{n!(N-n)!(1+\delta)_{N}}}\,R_{n}\left(\lambda(x);\gamma,\delta,N\right)\!,
 \]
 for $x\in\{0,1,\dots,N\}$. Consequently, one has
 \[
  \spec\left(H^{(4)}\right)=\left\{\binom{2N+\gamma+\delta+1}{x}\,\bigg|\,x\in\{0,1,\dots,N\}\right\}\!.
 \]
\end{thm}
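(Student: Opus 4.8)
The plan is to recognise $H^{(4)}$ as a member of the commutant of the finite Jacobi matrix $\mathcal{J}$ attached to the dual Hahn polynomials and then to read off its diagonalisation from the explicitly known spectral resolution of $\mathcal{J}$. The orthonormal dual Hahn polynomials $\hat{P}_{n}$ of the statement satisfy a three-term recurrence whose Jacobi parameters
\[
 b_{n}=(n+\gamma+1)(N-n)+n(N+\delta+1-n),\qquad a_{n}^{2}=(n+1)(n+\gamma+1)(N-n)(N-n+\delta)
\]
are polynomials in $n$ of degrees $2$ and $4$, so that $\mathcal{J}$ belongs to the class treated in Section~\ref{sec:prelim}. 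Under the unitary $U\colon e_{n}\mapsto\hat{P}_{n}$ the matrix $\mathcal{J}$ is carried to the operator of multiplication by $\lambda(x)=x(x+\gamma+\delta+1)$ on $L^{2}(\R,\dd\mu)$. Since
\[
 \lambda(x_{2})-\lambda(x_{1})=(x_{2}-x_{1})(x_{1}+x_{2}+\gamma+\delta+1)\neq0
\]
for distinct $x_{1},x_{2}\in\{0,1,\dots,N\}$ whenever $\gamma,\delta>-1$, the function $\lambda$ separates the atoms of $\mu$; hence $T_{\lambda}$ has simple spectrum and its commutant consists precisely of the multiplication operators.

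First I would establish the commutation relation $\mathcal{J}H^{(4)}=H^{(4)}\mathcal{J}$. Using the description of the commutant furnished by Theorem~\ref{thm:commut_matr_Jac}, this reduces to checking that the entries~\eqref{eq:def_hank4} satisfy the finitely many identities encoded in $[\mathcal{J},H^{(4)}]_{m,n}=0$; after substituting the explicit Pochhammer expressions these collapse to elementary relations among ratios of Pochhammer symbols. Because $\mathcal{J}$ has simple spectrum, it then follows that $UH^{(4)}U^{-1}=T_{h}$ for a uniquely determined real-valued $h$ on $\{0,1,\dots,N\}$. No subtlety concerning self-adjointness intervenes here: $H^{(4)}$ is a finite real symmetric matrix, so it is automatically self-adjoint and Theorem~\ref{thm:def_H} is not required.

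It remains to identify $h$, and this is the one genuinely computational step. From $UH^{(4)}U^{-1}=T_{h}$ one has $\sum_{n=0}^{N}H^{(4)}_{m,n}\hat{P}_{n}(x)=h(x)\hat{P}_{m}(x)$ at every atom $x$; specialising to $m=0$, where $\hat{P}_{0}\equiv1$, yields
\[
 h(x)=\frac{1}{(1+\delta)_{N}}\sum_{n=0}^{N}\frac{(1+\gamma)_{n}(1+\delta)_{2N-n}}{n!\,(N-n)!}\,R_{n}\!\left(\lambda(x);\gamma,\delta,N\right)\!.
\]
I would insert the ${}_{3}F_{2}$ representation~\eqref{eq:def_dualhahn} of $R_{n}$, interchange the two finite summations, and split the Pochhammer symbols via $(1+\delta)_{2N-n}=(1+\delta)_{N}(N+1+\delta)_{N-n}$ together with $(-n)_{k}/n!=(-1)^{k}/(n-k)!$. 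The inner sum then becomes a Chu--Vandermonde convolution producing the factor $\binom{2N+\gamma+\delta+1}{N-k}$, and after simplification the outer sum reduces to the terminating evaluation
\[
 h(x)=\binom{2N+\gamma+\delta+1}{N}\,\pFq{2}{1}{-x,x+\gamma+\delta+1}{N+\gamma+\delta+2}{1},
\]
to which the Chu--Vandermonde summation applies a second time, giving $h(x)=\binom{2N+\gamma+\delta+1}{N-x}$. This double application of Chu--Vandermonde is the main obstacle, being the only point where a nontrivial hypergeometric identity rather than bookkeeping is needed. Finally, since $\lambda$ is a bijection of $\{0,1,\dots,N\}$ onto the spectrum of $\mathcal{J}$, the eigenvalues of $H^{(4)}$ are exactly the numbers $h(x)=\binom{2N+\gamma+\delta+1}{N-x}$, and as $x\mapsto N-x$ permutes $\{0,1,\dots,N\}$ this set coincides with $\big\{\binom{2N+\gamma+\delta+1}{x}\mid x\in\{0,1,\dots,N\}\big\}$, which is the claimed spectrum.
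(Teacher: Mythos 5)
Your proposal is correct, and its overall skeleton coincides with the paper's: exhibit $H^{(4)}$ as a weighted Hankel matrix commuting with the dual Hahn Jacobi matrix, invoke simplicity of the spectrum of $J$ to conclude $UH^{(4)}U^{-1}=T_{h}$, and identify $h$ from the $m=0$ row, arriving at the same intermediate expression \eqref{eq:h_hankl4_pre}. Where you genuinely diverge is in the evaluation of that sum, which is indeed the only nontrivial computation. The paper rewrites \eqref{eq:h_hankl4_pre} as $\frac{(-1)^{N}}{N!}\frac{\dd^{N}}{\dd t^{N}}\big|_{t=1}\Phi(t)$, where $\Phi$ is built from the generating function~\cite[Eq.~9.6.12]{koekoek-etal_10} for dual Hahn polynomials, and then applies Chu--Vandermonde once to the resulting terminating ${}_{2}F_{1}$. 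You instead expand $R_{n}$ as a ${}_{3}F_{2}$, interchange the two finite sums, evaluate the inner sum by the Vandermonde convolution $\sum_{j}\binom{r+j}{j}\binom{s+M-j}{M-j}=\binom{r+s+M+1}{M}$ (with $r=\gamma+k$, $s=\delta+N$, $M=N-k$, giving precisely $\binom{2N+\gamma+\delta+1}{N-k}$ after extracting $(1+\gamma)_{k}(1+\delta)_{N}$), observe the fortunate cancellation of $(1+\gamma)_{k}$ against $(\gamma+1)_{k}$, and finish with a second Chu--Vandermonde; I have checked that this chain of manipulations closes correctly and yields $\binom{2N+\gamma+\delta+1}{N-x}$. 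Your route is more elementary and self-contained (pure series bookkeeping, no differentiation of a generating function), at the cost of a longer computation; the paper's route is shorter but leans on the specific identity~9.6.12. Two minor remarks: Theorem~\ref{thm:commut_matr_Jac} is not actually needed for the commutation step, since checking $[\mathcal{J},H^{(4)}]=0$ for a weighted Hankel matrix reduces directly to \eqref{eq:commut_eq_reform}, whose coefficients all carry the common factor $(m-n)$ and depend otherwise only on $m+n$, so that $h_{k}=(-1)^{k}(1+\gamma)_{k}(1+\delta)_{2N-k}$ is verified by elementary Pochhammer algebra exactly as you assert; and your observation that no self-adjointness issue arises in the finite-dimensional case agrees with the paper, which likewise bypasses Theorem~\ref{thm:def_H} here.
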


As a consequence, we can evaluate the following Hankel determinant.

\begin{cor}\label{cor:dual_hahn}
 The determinant of the Hankel matrix $G$ with entries
 \[
  G_{m,n}=(1+\gamma)_{m+n}(1+\delta)_{2N-m-n}, \quad m,n\in\{0,1,\dots,N\},
 \]
 reads
 \[
  \det G=\prod_{s=0}^{N}s!(1+\gamma)_{s}(1+\delta)_{s}(2N-s+2+\gamma+\delta)_{s},
 \]
 for all $\gamma,\delta\in\C$.
\end{cor}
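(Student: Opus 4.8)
The plan is to exploit Theorem~\ref{thm:dual_hahn}, observing that the matrix $H^{(4)}$ appearing there is nothing but a symmetric diagonal rescaling of $G$, so that $\det G$ can be recovered from the explicitly known spectrum of $H^{(4)}$. Concretely, I would first write $H^{(4)}=DGD$, where $D=\diag(w_{0},\dots,w_{N})$ is the diagonal matrix of weights
\[
 w_{m}=\frac{1}{\sqrt{m!(N-m)!(1+\gamma)_{m}(1+\delta)_{N-m}}},
\]
which one reads off directly by comparing~\eqref{eq:def_hank4} with the definition of $G$. Taking determinants gives $\det H^{(4)}=(\det D)^{2}\det G$, hence
\[
 \det G=\left(\prod_{m=0}^{N}w_{m}^{-2}\right)\det H^{(4)}=\left(\prod_{m=0}^{N}m!(N-m)!(1+\gamma)_{m}(1+\delta)_{N-m}\right)\det H^{(4)}.
\]

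Next I would compute $\det H^{(4)}$ from the spectral information. Since Theorem~\ref{thm:dual_hahn} asserts that $H^{(4)}$ is unitarily equivalent to multiplication by $h$ on $L^{2}(\R,\dd\mu)$ with $\mu$ supported on the $N+1$ points $\{0,1,\dots,N\}$, in the orthonormal basis of normalized point masses this multiplication operator is diagonal with entries $h(0),\dots,h(N)$. As the determinant is invariant under unitary equivalence,
\[
 \det H^{(4)}=\prod_{x=0}^{N}h(x)=\prod_{x=0}^{N}\binom{2N+\gamma+\delta+1}{N-x}=\prod_{x=0}^{N}\binom{2N+\gamma+\delta+1}{x},
\]
the last equality being the substitution $x\mapsto N-x$.

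It then remains to simplify the product. I would rewrite the generalized binomial coefficient as $\binom{2N+\gamma+\delta+1}{x}=(2N-x+2+\gamma+\delta)_{x}/x!$, reindex $\prod_{m}(N-m)!=\prod_{m}m!$ and $\prod_{m}(1+\delta)_{N-m}=\prod_{m}(1+\delta)_{m}$, and observe that the factor $\prod_{x}x!$ in the denominator cancels one of the two copies of $\prod_{m}m!$. Collecting the surviving factors yields exactly
\[
 \det G=\prod_{s=0}^{N}s!(1+\gamma)_{s}(1+\delta)_{s}(2N-s+2+\gamma+\delta)_{s}.
\]

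Finally, Theorem~\ref{thm:dual_hahn} is stated for $\gamma,\delta>-1$, so at this point the identity is established only on that range. To reach all complex $\gamma,\delta$ I would invoke analyticity: both $\det G$ (a polynomial in the entries, each a Pochhammer polynomial in $\gamma,\delta$) and the claimed right-hand side are polynomials in $(\gamma,\delta)$, and two polynomials agreeing on the nonempty open set $(-1,\infty)^{2}$ must coincide identically. The only genuinely delicate point is the bookkeeping in the Pochhammer simplification together with the correct interpretation of the generalized binomial coefficient; the structural input -- that $H^{(4)}$ is a conjugate of $G$ by a diagonal matrix and that its eigenvalues are given explicitly -- is immediate from the preceding material.
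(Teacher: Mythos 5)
Your proposal is correct and takes essentially the same route as the paper's own proof: the paper likewise writes $H^{(4)}=WGW$ with $W$ the diagonal matrix of weights, obtains $\det G=\bigl(\prod_{n=0}^{N}w_{n}^{-2}\bigr)\bigl(\prod_{x=0}^{N}h(x)\bigr)$ from the spectrum given in Theorem~\ref{thm:dual_hahn}, and extends from $\gamma,\delta>-1$ to all of $\C$ by the same polynomial-identity argument. The Pochhammer bookkeeping you spell out is exactly what the paper compresses into ``simple manipulations.''
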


\section{Preliminaries}\label{sec:prelim}

The commutator method consists of two fundamental steps. First, to a given Jacobi matrix $\mathcal{J}$ we find a weighted Hankel matrix $\mathcal{H}$ commuting formally 
with~$\mathcal{J}$. The formal commutation means that $\mathcal{H}\mathcal{J}$ coincides with 
$\mathcal{J}\mathcal{H}$ as elements of the space of semi-infinite matrices~$\C^{\infty,\infty}$, where the product
is to be understood as the usual matrix product. Since the Jacobi matrix is banded the products $\mathcal{H}\mathcal{J}$
and $\mathcal{J}\mathcal{H}$ are well-defined and $\mathcal{H}\mathcal{J}=\mathcal{J}\mathcal{H}$ if and only if
 \begin{align}
   &(b_{m}-b_{n})w_{m}w_{n}h_{m+n}+(a_{m-1}w_{m-1}w_{n}-a_{n-1}w_{m}w_{n-1})h_{m+n-1}\nonumber\\
 &+(a_{m}w_{m+1}w_{n}-a_{n}w_{m}w_{n+1})h_{m+n+1}=0, \label{eq:comm_rel_HJ=JH}
 \end{align}
for all $m,n\in\N_{0}$, where one has to set $a_{-1}:=0$. It will be useful to rewrite~\eqref{eq:comm_rel_HJ=JH} in a slightly different form.

\begin{lem}
 The weighted Hankel matrix $\mathcal{H}$ with the entries given by~\eqref{eq:def_weighted_hankel} 
 belongs to the commutant of $\mathcal{J}$, if and only if
 \begin{equation}
 \left(b_{m}-b_{n}\right)h_{m+n}+
 \left(\frac{a_{m-1}^{2}}{\kappa_{m-1}}-\frac{a_{n-1}^{2}}{\kappa_{n-1}}\right)h_{m+n-1}+
 \left(\kappa_{m}-\kappa_{n}\right)h_{m+n+1}=0,
\label{eq:commut_eq_reform}
\end{equation}
where
\begin{equation}
 \kappa_{n}=\frac{a_{n}w_{n+1}}{w_{n}},
\label{eq:def_kappa}
\end{equation}
holds for all $0\leq m<n$. For $m=0$, the term $a_{-1}^{2}/\kappa_{-1}$, that appears in~\eqref{eq:commut_eq_reform}, 
is to be understood as zero.
\end{lem}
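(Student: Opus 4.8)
The plan is to derive \eqref{eq:commut_eq_reform} directly from the commutation relation \eqref{eq:comm_rel_HJ=JH} by dividing out the weights and then exploiting the antisymmetry of the relation in the indices $m,n$. Since by hypothesis $w_{n}\neq0$ for every $n\in\N_{0}$, the scalar $w_{m}w_{n}$ is nonzero, so I would begin by dividing \eqref{eq:comm_rel_HJ=JH} by $w_{m}w_{n}$. This turns the coefficient of $h_{m+n}$ into $b_{m}-b_{n}$, the coefficient of $h_{m+n+1}$ into $\frac{a_{m}w_{m+1}}{w_{m}}-\frac{a_{n}w_{n+1}}{w_{n}}$, which by the definition \eqref{eq:def_kappa} of $\kappa_{n}$ equals $\kappa_{m}-\kappa_{n}$, and the coefficient of $h_{m+n-1}$ into $\frac{a_{m-1}w_{m-1}}{w_{m}}-\frac{a_{n-1}w_{n-1}}{w_{n}}$.

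The next step is the key identity matching the lower coefficient. From $\kappa_{m-1}=a_{m-1}w_{m}/w_{m-1}$ one reads off $a_{m-1}^{2}/\kappa_{m-1}=a_{m-1}w_{m-1}/w_{m}$, so the coefficient of $h_{m+n-1}$ becomes exactly $\frac{a_{m-1}^{2}}{\kappa_{m-1}}-\frac{a_{n-1}^{2}}{\kappa_{n-1}}$. At this point the rewritten relation coincides term by term with \eqref{eq:commut_eq_reform}, and the two formulations are equivalent for all $m,n\in\N_{0}$.

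It then remains to reduce the range of indices. The left-hand side of \eqref{eq:commut_eq_reform} vanishes identically when $m=n$, since each bracketed coefficient does, and it is antisymmetric under the exchange $m\leftrightarrow n$, because the Hankel factors $h_{m+n-1},h_{m+n},h_{m+n+1}$ are symmetric in $(m,n)$ while every bracket changes sign. Hence the equation for a pair $(m,n)$ with $m>n$ is merely $-1$ times the equation for $(n,m)$, and the diagonal case is automatic; it therefore suffices to impose \eqref{eq:commut_eq_reform} for $0\le m<n$, which is the asserted range.

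Finally, I would address the degenerate term at $m=0$. Since $w_{-1}$ is undefined the symbol $\kappa_{-1}$ is meaningless, but with $a_{-1}=0$ the offending summand $a_{m-1}w_{m-1}w_{n}h_{m+n-1}$ in \eqref{eq:comm_rel_HJ=JH} already vanishes when $m=0$; this justifies the convention $a_{-1}^{2}/\kappa_{-1}:=0$ and preserves the equivalence across the boundary. There is no genuine analytic obstacle in this lemma: the argument is elementary algebra, and the only points demanding care are the bookkeeping of the boundary case $m=0$ and the verification that restricting to $0\le m<n$ discards no information.
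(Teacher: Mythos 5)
Your proof is correct and is precisely the argument the paper intends: the lemma is stated there without proof as an immediate reformulation of~\eqref{eq:comm_rel_HJ=JH}, and your steps---dividing by $w_{m}w_{n}\neq0$, recognizing the coefficient of $h_{m+n+1}$ as $\kappa_{m}-\kappa_{n}$, using $a_{m-1}^{2}/\kappa_{m-1}=a_{m-1}w_{m-1}/w_{m}$ for the lower coefficient, and then discarding the redundant cases $m=n$ (trivial) and $m>n$ (antisymmetry), with the convention at $m=0$ justified by $a_{-1}=0$---supply exactly the routine algebra that was omitted. No gaps.
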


\begin{rem}
 On the other hand, from~\eqref{eq:def_kappa} it follows that
\begin{equation}
  w_{n}=w_{0}\prod_{j=0}^{n-1}\frac{\kappa_{j}}{a_{j}}, \quad n\in\N_{0}.
 \label{eq:w_kappa}
\end{equation}
\end{rem}

In all cases under investigation, the Jacobi matrix $\mathcal{J}$ determines a unique self-adjoint operator 
$J$ on $\ell^{2}(\N_{0})$. We will prove a theorem that gives us a condition guaranteeing that the weighted Hankel 
matrix~$\mathcal{H}$ from the commutant of~$\mathcal{J}$ also determines a unique self-adjoint operator 
$H$ on $\ell^{2}(\N_{0})$ and, in this case, $H=h(J)$ for some Borel function $h$. This is a nontrivial
problem especially when the matrix~$\mathcal{H}$ does not give rise to a bounded operator on~$\ell^{2}(\N_{0})$
in which case more self-adjoint operators can be associated with~$\mathcal{H}$ in principle.

The second step of the commutator method comprises a determination of the function~$h$. This is 
carried out by using special properties of respective particular families of orthogonal polynomials.

\subsection{The commutant of a Jacobi matrix}

Recall that a Jacobi matrix $\mathcal{J}$ determines a unique family of orthonormal polynomials $\{\hat{P}_{n}\}_{n=0}^{\infty}$
by the three-term recurrence~\cite{akhiezer90}
\begin{equation}
 a_{n-1}\hat{P}_{n-1}(x)+(b_{n}-x)\hat{P}_{n}(x)+a_{n}\hat{P}_{n+1}(x)=0, \quad n\in\N_{0},
\label{eq:three-term_recur}
\end{equation}
with initial conditions $\hat{P}_{-1}(x):=0$ and $\hat{P}_{0}(x):=1$.

First, we prove an algebraic result that gives a description of the commutant of a Jacobi matrix.
It turns out that any matrix commuting with a Jacobi matrix has to be symmetric. We will use just this consequence
in order to slightly simplify assumptions of a forthcoming theorem but the complete result can be of independent interest.

\begin{thm}\label{thm:commut_matr_Jac}
 Let $\mathcal{J}$ be a Jacobi matrix, $\{\hat{P}_{n}\}_{n=0}^{\infty}$ the family of orthonormal polynomials determined by~$\mathcal{J}$, and $\mathcal{A}\in\C^{\infty,\infty}$. Then
 \begin{equation}
 \mathcal{J}\mathcal{A}=\mathcal{A}\mathcal{J} \quad \Longleftrightarrow \quad
 \exists \{\alpha_{k}\}_{k=0}^{\infty}\subset\C,\quad 
 \mathcal{A}=\sum_{k=0}^{\infty}\alpha_{k}\hat{P}_{k}(\mathcal{J}).
 \label{eq:commut_equiv_Jac}
 \end{equation}
 As a consequence, if $\mathcal{J}\mathcal{A}=\mathcal{A}\mathcal{J}$, then $\mathcal{A}=\mathcal{A}^{T}$.
\end{thm}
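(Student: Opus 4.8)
The plan is to prove both implications around the key algebraic identity $\hat{P}_{n}(\mathcal{J})e_{0}=e_{n}$, which follows at once from the three-term recurrence~\eqref{eq:three-term_recur}: writing it as $a_{n}\hat{P}_{n+1}(x)=(x-b_{n})\hat{P}_{n}(x)-a_{n-1}\hat{P}_{n-1}(x)$, substituting $x\mapsto\mathcal{J}$, applying the result to $e_{0}$, and comparing with $\mathcal{J}e_{n}=a_{n-1}e_{n-1}+b_{n}e_{n}+a_{n}e_{n+1}$ yields, by induction on $n$, that $\hat{P}_{n}(\mathcal{J})e_{0}=e_{n}$. In particular the $0$-th column of $\hat{P}_{n}(\mathcal{J})$ is $e_{n}$, i.e. $(\hat{P}_{n}(\mathcal{J}))_{m,0}=\delta_{m,n}$. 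With this in hand the implication $\Leftarrow$ is essentially immediate: each $\hat{P}_{k}(\mathcal{J})$ is a polynomial in $\mathcal{J}$ and hence commutes with $\mathcal{J}$, and since $\mathcal{J}$ is banded, every entry of $\mathcal{J}\mathcal{A}$ and of $\mathcal{A}\mathcal{J}$ is a finite combination of entries of $\mathcal{A}$, so the commutation passes through the sum term by term.

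Before anything else, however, I must check that the right-hand side of~\eqref{eq:commut_equiv_Jac} is a well-defined element of $\C^{\infty,\infty}$, i.e. that for each fixed $(m,n)$ only finitely many summands $(\hat{P}_{k}(\mathcal{J}))_{m,n}$ are nonzero; I expect this to be the main obstacle. The lower bound is clear from bandedness, since $\hat{P}_{k}(\mathcal{J})$ has degree $k$ in the tridiagonal $\mathcal{J}$, so $(\hat{P}_{k}(\mathcal{J}))_{m,n}=0$ for $k<|m-n|$. The crucial point is the upper bound $(\hat{P}_{k}(\mathcal{J}))_{m,n}=0$ for $k>m+n$, which I would establish by induction on $n$, the base case $n=0$ being the identity above. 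For the step I apply $\hat{P}_{k}(\mathcal{J})$ to $a_{n}e_{n+1}=(\mathcal{J}-b_{n})e_{n}-a_{n-1}e_{n-1}$, use that $\hat{P}_{k}(\mathcal{J})$ commutes with $\mathcal{J}$ to move $\mathcal{J}$ to the left, and read off the $m$-th entry; the banded action of $\mathcal{J}$ then expresses $a_{n}(\hat{P}_{k}(\mathcal{J}))_{m,n+1}$ through $(\hat{P}_{k}(\mathcal{J}))_{m-1,n}$, $(\hat{P}_{k}(\mathcal{J}))_{m,n}$, $(\hat{P}_{k}(\mathcal{J}))_{m+1,n}$, and $(\hat{P}_{k}(\mathcal{J}))_{m,n-1}$, each of which vanishes by the inductive hypothesis once $k>m+n+1$. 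As $a_{n}\neq0$, this gives $(\hat{P}_{k}(\mathcal{J}))_{m,n+1}=0$ and closes the induction. Consequently $(\hat{P}_{k}(\mathcal{J}))_{m,n}\neq0$ only for $|m-n|\le k\le m+n$, and each entry of the sum in~\eqref{eq:commut_equiv_Jac} is a finite sum.

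For the implication $\Rightarrow$, I would assume $\mathcal{J}\mathcal{A}=\mathcal{A}\mathcal{J}$ and set $\alpha_{k}:=\mathcal{A}_{k,0}$, the entries of the $0$-th column of $\mathcal{A}$. Let $\mathcal{B}:=\sum_{k=0}^{\infty}\alpha_{k}\hat{P}_{k}(\mathcal{J})$, which is well defined by the previous step and commutes with $\mathcal{J}$ by $\Leftarrow$. Using $(\hat{P}_{k}(\mathcal{J}))_{m,0}=\delta_{m,k}$ one gets $\mathcal{B}e_{0}=\sum_{k}\alpha_{k}e_{k}=\mathcal{A}e_{0}$, so $\mathcal{C}:=\mathcal{A}-\mathcal{B}$ commutes with $\mathcal{J}$ and has vanishing $0$-th column. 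It then remains to show that a matrix commuting with $\mathcal{J}$ is determined by its $0$-th column. By the same mechanism as above, applying $\mathcal{C}$ to $a_{n}e_{n+1}=(\mathcal{J}-b_{n})e_{n}-a_{n-1}e_{n-1}$ and commuting $\mathcal{C}$ past $\mathcal{J}$ gives the column recurrence $a_{n}\,\mathcal{C}e_{n+1}=(\mathcal{J}-b_{n})\mathcal{C}e_{n}-a_{n-1}\mathcal{C}e_{n-1}$; since $a_{n}\neq0$ and $\mathcal{C}e_{0}=0$ (with the convention $\mathcal{C}e_{-1}=0$, $a_{-1}=0$), induction forces $\mathcal{C}e_{n}=0$ for all $n$, hence $\mathcal{C}=0$ and $\mathcal{A}=\mathcal{B}=\sum_{k}\alpha_{k}\hat{P}_{k}(\mathcal{J})$.

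Finally, the symmetry $\mathcal{A}=\mathcal{A}^{T}$ follows as a corollary: $\mathcal{J}$ is symmetric tridiagonal, so $\mathcal{J}^{T}=\mathcal{J}$ and therefore $\hat{P}_{k}(\mathcal{J})^{T}=\hat{P}_{k}(\mathcal{J}^{T})=\hat{P}_{k}(\mathcal{J})$ for every $k$. Since transposition acts entrywise and the sum in~\eqref{eq:commut_equiv_Jac} is entrywise finite, I obtain $\mathcal{A}^{T}=\sum_{k}\alpha_{k}\hat{P}_{k}(\mathcal{J})^{T}=\sum_{k}\alpha_{k}\hat{P}_{k}(\mathcal{J})=\mathcal{A}$.
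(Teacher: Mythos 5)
Your proof is correct and follows essentially the same route as the paper: the key identity $(\hat{P}_{k}(\mathcal{J}))_{m,0}=\delta_{k,m}$, the reduction of $(\Longrightarrow)$ to the statement that a matrix commuting with $\mathcal{J}$ and having vanishing zeroth column must vanish, and the column-by-column induction exploiting $a_{n}\neq 0$ are exactly the paper's argument (your $\mathcal{B}$ is the paper's $\tilde{\mathcal{A}}$). The one genuine difference is how the well-definedness of $\sum_{k}\alpha_{k}\hat{P}_{k}(\mathcal{J})$ is handled: you prove the upper bound $(\hat{P}_{k}(\mathcal{J}))_{m,n}=0$ for $k>m+n$ by a purely algebraic induction on the column index, propagating vanishing through the recurrence $a_{n}e_{n+1}=(\mathcal{J}-b_{n})e_{n}-a_{n-1}e_{n-1}$, whereas the paper (in the remark following the theorem) obtains the same bound from the representation $(\hat{P}_{k}(\mathcal{J}))_{m,n}=\int_{\R}\hat{P}_{k}\hat{P}_{m}\hat{P}_{n}\,\dd\mu$ and orthogonality. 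Your version buys a self-contained argument that never invokes a measure of orthogonality (and so works verbatim in the purely formal matrix setting), at the cost of a slightly longer induction; the paper's version is shorter but imports the spectral/orthogonality machinery to justify an algebraic finiteness claim.
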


\begin{rem}
 The sum on the right-hand side of the equivalence~(\ref{eq:commut_equiv_Jac}) is actually finite for any matrix element. More precisely, one has
 \[
  \mathcal{A}_{m,n}=\sum_{k=|m-n|}^{m+n}\alpha_{k}\left(\hat{P}_{k}(\mathcal{J})\right)_{m,n}\!, \quad \forall m,n\in\N_{0}.
 \]
 This follows from the well-known relation $\hat{P}_{n}(\mathcal{J})e_{0}=e_{n}$ valid for all $n\in\N_{0}$ that 
 implies
 \begin{align*}
  \left(\hat{P}_{k}(\mathcal{J})\right)_{m,n}&=\langle e_{m}, \hat{P}_{k}(\mathcal{J})e_{n}\rangle_{\ell^{2}(\N_{0})}=
  \langle e_{0}, \hat{P}_{m}(\mathcal{J})\hat{P}_{k}(\mathcal{J})\hat{P}_{n}(\mathcal{J})e_{0}\rangle_{\ell^{2}(\N_{0})}\\
  &=\int_{\R}\hat{P}_{k}(x)\hat{P}_{m}(x)\hat{P}_{n}(x)\dd\mu(x),
 \end{align*}
 where $\mu$ is a (not necessarily unique) measure of orthogonality of $\{\hat{P}_{n}\}_{n=0}^{\infty}$. 
 By the orthogonality of~$\{\hat{P}_{n}\}_{n=0}^{\infty}$, the above integral vanishes whenever 
 $k+m<n$ or $m+n<k$ or $k+n<m$. Hence the range for the index $k$ can be restricted to $|m-n|\leq k\leq m+n$.
\end{rem}

\begin{proof}
 Clearly, $\mathcal{J}$ and $\hat{P}_{n}(\mathcal{J})$ commute for any $n\in\N_{0}$. Hence the proof of the implication $(\Longleftarrow)$ is immediate.
 Also, if the equivalence is proved, then the consequence about the symmetry of $\mathcal{A}$ follows readily from the 
 fact that $\hat{P}_{n}(\mathcal{J})$ is symmetric for all $n\in\N_{0}$.
 
 It remains to prove the implication $(\Longrightarrow)$. First, suppose $\mathcal{J}\mathcal{A}=\mathcal{A}\mathcal{J}$
 and that the first column of $\mathcal{A}$ vanishes, i.e., $\mathcal{A}e_{0}=0$. We shall show by mathematical induction 
 in $n$ that $\mathcal{A}e_{n}=0$ for all $n\in\N_{0}$, meaning that $\mathcal{A}=0$.
 
 Let $n\in\N_{0}$. Assuming $\mathcal{A}e_{k}=0$ for all $k\in\{0,1,\dots,n\}$, we have $\mathcal{A}\mathcal{J}e_{k}=0$ for all $k\in\{0,1,\dots,n\}$.
 But
 \[
  \mathcal{J}e_{0}\in\spn\{e_{0},e_{1}\} \quad \mbox{ and } \quad \mathcal{J}e_{k}\in\spn\{e_{k-1},e_{k},e_{k+1}\}, \; \mbox{ for } k\in\N,
 \]
 and thus we deduce that $\mathcal{J}_{n+1,n}\mathcal{A}e_{n+1}=0$. By our assumptions $\mathcal{J}_{n+1,n}\neq0$
 and therefore $\mathcal{A}e_{n+1}=0$.
 
 Second, consider the general case when $\mathcal{J}\mathcal{A}=\mathcal{A}\mathcal{J}$ and put
 \[
  \tilde{\mathcal{A}}:=\sum_{k=0}^{\infty}\mathcal{A}_{k,0}\hat{P}_{k}(\mathcal{J}).
 \]
 Since 
 \[
  \left(\hat{P}_{k}(\mathcal{J})\right)_{m,0}=\delta_{k,m}, \quad \forall k,m\in\N_{0},
 \]
 the first columns of $\mathcal{A}$ and $\tilde{\mathcal{A}}$ coincide. By applying the first argument to 
 $\mathcal{A}-\tilde{\mathcal{A}}$, we conclude that $\mathcal{A}=\tilde{\mathcal{A}}$.
\end{proof}

\subsection{On the definition of an operator associated with a weighted Hankel matrix}

It is an old question whether we can define a closed and densely defined linear operator on 
$\ell^{2}(\N_{0})$ whose matrix representation with respect to the standard basis $\{e_{n} \mid n\in\N_{0}\}$ 
coincides with a given semi-infinite matrix~$\mathcal{A}$.
This is possible when the rows and columns of $\mathcal{A}$ can be identified with elements of $\ell^{2}(\N_{0})$.
Below we briefly summarize the standard procedure showing how to prescribe the operators to $\mathcal{A}$.
We restrict ourself to real symmetric matrices. This slightly simplifies the procedure and is sufficient 
for our needs; for a general summary see, for example,~\cite[Sec.~2.1]{beckermann_jcam01}.

Denote 
\[
\ell_{0}:=\spn\{e_{n} \mid n\in\N_{0}\}. 
\]
Since columns of $\mathcal{A}$ can be identified with vectors in $\ell^{2}(\N_{0})$, we can define
an auxiliary operator $\dot{A}$ acting on vectors 
\[
x\in\Dom(\dot{A}):=\ell_{0}
\]
by the formal matrix product $\dot{A}x:=\mathcal{A}x$, where $x$ is understood as an infinite column vector.
Assuming $\mathcal{A}=\mathcal{A}^{T}\in\R^{\infty,\infty}$, $\dot{A}$ is symmetric and hence
closable in~$\ell^{2}(\N_{0})$. Then we may define $A_{\min}$ as the closure of $\dot{A}$.
While $A_{\min}$ is the minimal closed operator associated with $\mathcal{A}$ such that
$\ell_{0}\subset\Dom(A_{\min})$, the maximal domain operator $A_{\max}$ acts again by the matrix multiplication
by $\mathcal{A}$ but it is equipped with the domain
\[
 \Dom(A_{\max}):=\{x\in\ell^{2}(\N_{0}) \mid \mathcal{A}x \in\ell^{2}(\N_{0})\}.
\]
It is an easy exercise to show that
\[
 (A_{\min})^{*}=A_{\max} \quad\mbox{ and }\quad (A_{\max})^{*}=A_{\min}.
\]
Consequently, $A_{\max}$ is closed.

Clearly, $A_{\min}\subset A_{\max}$ but the equality does not hold in general.
If $A_{\min}=A_{\max}$, then the matrix $\mathcal{A}$ determines a unique
self-adjoint operator $A:=A_{\min}=A_{\max}$ with $\ell_{0}\subset\Dom(A)$.
In this case, we say that the matrix $\mathcal{A}$ is \emph{proper}.

In general, it is a difficult task to decide whether a given matrix~$\mathcal{A}$ is proper.
In the particular case when $\mathcal{A}=\mathcal{J}$ is a Jacobi matrix, the properness
is very well understood. One can encounter various terminology in literature equivalent to
the properness of $\mathcal{J}$, for example, $\mathcal{J}$ is said to be in the limit-point case or
$\dot{J}$ essentially self-adjoint. In addition, the corresponding Hamburger moment problem is 
in the determinate case if and only if $\mathcal{J}$ is proper, see~\cite{akhiezer90}.

Assume $\mathcal{J}$ to be a proper Jacobi matrix and $J$ the corresponding self-adjoint Jacobi operator.
Then there is a unique probability measure $\mu$ on $\R$ with finite moments such that $J$ is unitarily equivalent to
the multiplication operator by the coordinate function on $L^{2}(\R,\dd\mu)$. The unitary operator $U$
satisfies
\[
 U:\ell^{2}(\N_{0})\to L^{2}(\R,\dd\mu): e_{n} \mapsto \hat{P}_{n}, \quad \forall n\in\N_{0},
\]
where $\{\hat{P}_{n}\}_{n=0}^{\infty}$ is the sequence of polynomials orthonormal with respect to $\mu$ and 
determined by $\mathcal{J}$, i.e., by the three-term recurrence relation~\eqref{eq:three-term_recur} with the standard initial conditions.

The next theorem gives a necessary and sufficient condition for a real matrix commuting formally 
with a proper Jacobi matrix to be proper as well. We will use it to guarantee that the weighted Hankel 
matrices listed in Section~\ref{sec:main} determine unique self-adjoint operators in~$\ell^{2}(\N_{0})$.

\begin{thm}\label{thm:def_H}
 Let $\mathcal{J}$ be a proper Jacobi matrix, $\mathcal{H}\in\R^{\infty,\infty}$ with columns in $\ell^{2}(\N_{0})$, and
 $\mathcal{J}\mathcal{H}=\mathcal{H}\mathcal{J}$. Then $\mathcal{H}$ is proper if and only if
 the set of all polynomials $\C[x]$ is dense in $L^{2}(\R,(|h|+1)^{2}\dd\mu)$, where $h:=U\mathcal{H}e_{0}$
 and $\mu$ and $U$ are determined by $J$ as above. Moreover, in this case, $H$ is unitarily equivalent via $U$ 
 to the multiplication operator by the function $h$ on $L^{2}(\R,\dd\mu)$, i.e.,
 \begin{equation}
  H=h\!\left(J\right).
 \label{eq:H_multiplic_op_h}
 \end{equation}
\end{thm}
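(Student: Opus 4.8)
The plan is to transport everything to $L^{2}(\R,\dd\mu)$ through the unitary $U$ and to match the operators attached to $\mathcal H$ with the multiplication operator by the real function $h$. I write $\dot H$, $H_{\min}=\overline{\dot H}$ and $H_{\max}=(\dot H)^{*}$ for the operators associated with $\mathcal H$ as in the preceding subsection; since $\mathcal J\mathcal H=\mathcal H\mathcal J$ forces $\mathcal H=\mathcal H^{T}$ by Theorem~\ref{thm:commut_matr_Jac}, this machinery applies. Let $M_{h}$ denote multiplication by $h$ on $L^{2}(\R,\dd\mu)$ with the maximal domain $\{f : hf\in L^{2}(\R,\dd\mu)\}$. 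Because the $\hat P_{n}$ have real coefficients and $\mathcal H$ is real, $h=\sum_{n}\mathcal H_{n,0}\hat P_{n}$ is real-valued and $M_{h}$ is self-adjoint. As $UJU^{-1}$ is multiplication by the coordinate $x$, the asserted identity $H=h(J)$ is exactly $UHU^{-1}=M_{h}$.

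The central step, and the place where I expect the real work to sit, is the identity $U\mathcal He_{n}=h\hat P_{n}$ for every $n$; in particular that $h\hat P_{n}\in L^{2}(\R,\dd\mu)$. I would prove this by induction on $n$ using the recurrence~\eqref{eq:three-term_recur} and properness of $J$. The base case is $U\mathcal He_{0}=h=h\hat P_{0}$. For the step, the commutation gives $\mathcal J(\mathcal He_{n})=\mathcal H(\mathcal Je_{n})=a_{n-1}\mathcal He_{n-1}+b_{n}\mathcal He_{n}+a_{n}\mathcal He_{n+1}\in\ell^{2}(\N_{0})$; since $\mathcal J$ is proper one has $\Dom(J)=\{w\in\ell^{2}(\N_{0}) : \mathcal Jw\in\ell^{2}(\N_{0})\}$ with $J$ acting by the matrix, so $\mathcal He_{n}\in\Dom(J)$ and $J\mathcal He_{n}=\mathcal J\mathcal He_{n}$. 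Applying $U$, using $UJU^{-1}=M_{x}$ and the inductive hypothesis $U\mathcal He_{m}=h\hat P_{m}$ for $m\le n$, this becomes $x\,h\hat P_{n}=a_{n-1}h\hat P_{n-1}+b_{n}h\hat P_{n}+a_{n}U\mathcal He_{n+1}$; solving for $U\mathcal He_{n+1}$ and inserting the recurrence~\eqref{eq:three-term_recur} yields $U\mathcal He_{n+1}=h\hat P_{n+1}\in L^{2}(\R,\dd\mu)$. The essential point is that properness of $J$ is precisely what upgrades the \emph{formal} commutation of matrices to a genuine identity inside the (unbounded) functional calculus of $J$.

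Granting this, $\dot H$ corresponds under $U$ to $M_{h}|_{\C[x]}$, and I draw two consequences. First, $\C[x]\subset\Dom(M_{h})$ and the graph norm of $M_{h}$ equals $\int_{\R}(1+|h|^{2})\dd\mu$ on $\Dom(M_h)$, which is equivalent to the norm of $L^{2}(\R,(|h|+1)^{2}\dd\mu)$ since $1+|h|^{2}\le(|h|+1)^{2}\le2(1+|h|^{2})$; hence $\C[x]$ is dense in $L^{2}(\R,(|h|+1)^{2}\dd\mu)$ if and only if $\C[x]$ is a core for $M_{h}$, i.e. $\overline{M_{h}|_{\C[x]}}=M_{h}$. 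Second, with $B:=U^{-1}M_{h}U$ (self-adjoint), the relation $U\dot HU^{-1}=M_{h}|_{\C[x]}$ gives $H_{\min}=\overline{\dot H}\subseteq B$, and taking adjoints $B=B^{*}\subseteq(\dot H)^{*}=H_{\max}$, so $H_{\min}\subseteq B\subseteq H_{\max}$.

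The equivalence then follows from the rigidity of self-adjoint operators. If $\C[x]$ is dense in $L^{2}(\R,(|h|+1)^{2}\dd\mu)$, then $\overline{M_{h}|_{\C[x]}}=M_{h}$, that is $H_{\min}=B$; being self-adjoint, $H_{\min}$ admits no proper symmetric extension, so $H_{\min}=H_{\max}$, $\mathcal H$ is proper, and $H=B=U^{-1}M_{h}U=h(J)$, which is~\eqref{eq:H_multiplic_op_h}. Conversely, if $\mathcal H$ is proper then $H_{\min}=H_{\max}$ is self-adjoint; squeezed between two equal self-adjoint operators, $B$ must coincide with them, so $\overline{M_{h}|_{\C[x]}}=B=M_{h}$ and $\C[x]$ is a core for $M_{h}$, hence dense in $L^{2}(\R,(|h|+1)^{2}\dd\mu)$. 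In both directions the density condition is exactly the transported statement that the closure of $M_{h}$ from polynomials already equals the maximal multiplication operator.
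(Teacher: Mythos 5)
Your proof is correct and takes essentially the same route as the paper: transport $\dot H$ through $U$, identify it with multiplication by $h$ on $\C[x]$, and reduce properness of $\mathcal{H}$ to $\C[x]$ being a core of the self-adjoint maximal multiplication operator, which is equivalent to density of $\C[x]$ in $L^{2}(\R,(|h|+1)^{2}\dd\mu)$. The differences are only in execution: the paper obtains the key identity by inducting on monomials $x^{n}$ via the transported commutation $\dot H_{U}(x\psi)=x\dot H_{U}\psi$ instead of your column-wise induction $U\mathcal{H}e_{n}=h\hat P_{n}$ (which has the virtue of making explicit where properness of $\mathcal{J}$ enters), and it proves the core--density equivalence through density of $(|H_{U}|+1)(\C[x])$ in $L^{2}(\R,\dd\mu)$ rather than your graph-norm comparison $1+|h|^{2}\le(|h|+1)^{2}\le 2(1+|h|^{2})$, while your sandwich $H_{\min}\subseteq U^{-1}M_{h}U\subseteq H_{\max}$ plays the role of the paper's observation that $\dot H_{U}\subset H_{U}$ with $H_{U}$ self-adjoint.
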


\begin{proof}
 First, note that $\mathcal{H}=\mathcal{H}^{T}$ according to Theorem~\ref{thm:commut_matr_Jac} and hence columns as well as rows of $\mathcal{H}$ are in~$\ell^{2}(\N_{0})$. 
 Therefore $\dot{H}$ is a well defined and closable operator. Put
 \begin{equation}
  \dot{H}_{U}:=U\dot{H}U^{-1}.
 \label{eq:H_U_dot_unit_trans}
 \end{equation}
 Then $\Dom\dot{H}_{U}=\C[x]$ because $U$ is an isomorphism between $\ell_{0}$ and $\C[x]$. First, we show
 that $\dot{H}_{U}$ acts as the multiplication operator by the function $h=\dot{H}_{U}1$.
 
 For $J_{U}:=UJU^{-1}$, one has
 \[
  \Dom(J_{U})=\{\psi\in L^{2}(\R,\dd\mu) \mid x\psi(x)\in L^{2}(\R,\dd\mu)\} \;\mbox{ and }\; (J_{U}\psi)(x)=x\psi(x).
 \]
 Since $\mu$ has all moments finite, $\C[x]\subset\Dom J_{U}$ and we may define $\dot{J}_{U}:=J_{U}\upharpoonleft\C[x]$.
 Then the formal commutation of $\mathcal{J}$ and $\mathcal{H}$ means that
 \[
  (\dot{H}_{U}\dot{J}_{U}\psi)(x)=(\dot{J}_{U}\dot{H}_{U}\psi)(x), \quad \forall\psi\in\C[x].
 \]
 Hence $\Ran\dot{H}_{U}\subset\Dom J_{U}$ and the commutation relation can be rewritten as
 $\dot{H}_{U}\dot{J}_{U}=J_{U}\dot{H}_{U}$ which implies that
 \[
  \dot{H}_{U}(x\psi(x))=x(\dot{H}_{U}\psi)(x), \quad \forall\psi\in\C[x].
 \]
 Since $h=\dot{H}_{U}1$, one can use the above relation to verify that
 \[
  \dot{H}_{U}(x^{n})=h(x)x^{n}, \quad \forall n\in\N_{0},
 \]
 by mathematical induction in $n$. Thus, $\dot{H}_{U}\psi=h\psi$ for all $\psi\in\C[x]$.
 
 We known that $\mathcal{H}$ is proper if and only if $\dot{H}$ is  essentially self-adjoint.
 In view of~\eqref{eq:H_U_dot_unit_trans}, this is equivalent to the essential self-adjointness
 of $\dot{H}_{U}$. We have to show that this happens if and only if $\C[x]$ is dense in $L^{2}(\R,(|h|+1)^{2}\dd\mu)$.

 Let $H_{U}:=h(J_{U})$, i.e.,
 \[
  \Dom(H_{U})=\{\psi\in L^{2}(\R,\dd\mu) \mid h(x)\psi(x)\in L^{2}(\R,\dd\mu)\} \;\mbox{ and }\; (H_{U}\psi)(x)=h(x)\psi(x).
 \]
 Since $h$ is real-valued, $H_{U}$ is self-adjoint. Clearly $\dot{H}_{U}\subset H_{U}$ and hence 
 $\dot{H}_{U}$ is essentially self-adjoint if and only if $\overline{\dot{H}_{U}}=H_{U}$.
 
 Note that $\C[x]$ is dense in $L^{2}(\mathbb{R},(|h|+1)^{2}\dd\mu)$
 if and only if $(|H_{U}|+1)(\C[x])$ is dense in $L^{2}(\mathbb{R},\dd\mu)$.
 We have $\Dom H_{U}=\Dom|H_{U}|$ and $\|H_{U}\psi\|=\|\,|H_{U}|\psi\|$ for all $\psi\in\Dom H_{U}$.
 Moreover, $\Ran(|H_{U}|+1)^{\perp}=\Ker(|H_{U}|+1)=\{0\}.$

 The equation $\overline{\dot{H}_{U}}=H_{U}$ means exactly that $\C[x]$ is a core of $H_{U}$ and this happens
 if and only if $\C[x]$ is a core of $|H_{U}|$. Clearly, for a
 sequence $\{\psi_{n}\}$ from $\Dom H_{U}$ and $\psi\in\Dom H_{U}$, it holds true that
 \[
 \lim_{n\to\infty}(|H_{U}|+1)\psi_{n}=(|H_{U}|+1)\psi \;\ \mbox{ in } L^{2}(\R,\dd\mu)
 \]
 if and only if
 \[
 \lim_{n\to\infty}\psi_{n}=\psi \;\mbox{ and }\; \lim_{n\to\infty}|H_{U}|\psi_{n}=|H_{U}|\psi \;\ \mbox{ in } L^{2}(\R,\dd\mu).
 \]
  It follows that $\C[x]$ is a core of $|H_{U}|$ if and only if
 \[
 \overline{(|H_{U}|+1)(\C[x])}\supset\Ran(|H_{U}|+1).
 \]
 But $\Ran(|H_{U}|+1)$ is dense in $L^{2}(\R,\dd\mu)$ because $\Ker(|H_{U}|+1)=\{0\}$. Therefore $\C[x]$ is a core of $|H_{U}|$
 if and only if $(|H_{U}|+1)(\C[x])$ is dense in $L^{2}(\mathbb{R},\dd\mu)$. This proves the desired
 equivalence and also shows that
 \[
  H_{U}=h\!\left(J_{U}\right)
 \]
 from which the formula~\eqref{eq:H_multiplic_op_h} follows.
\end{proof}

The classical problem how to characterize measures $\nu$ on $\R$ with finite moments such that
$\C[x]$ is dense in $L^{2}(\R,\dd\nu)$ was solved by M.~Riesz. The theorem says that $\C[x]$ is dense in $L^{2}(\R,\dd\nu)$
if and only if $\nu$ is the so called N-extremal solution of the respective Hamburger moment problem, 
see~\cite[Thm.~2.3.3]{akhiezer90}. It need not be easy to decide whether a given measure~$\nu$ is N-extremal.
However, in the particular case when the corresponding moment problem is determinate, $\nu$ is N-extremal. 
There exist various sufficient conditions for the Hamburger moment problem to be determinate. For instance, 
if there exists $\epsilon>0$ such that
\[
\int_{\mathbb{R}}e^{\epsilon|x|}\dd\nu(x)<\infty,
\]
then the Hamburger moment problem corresponding to~$\nu$ is in the determinate case, 
see~\cite[Prob.~12, p.~86]{akhiezer90} or~\cite[Cor.~4.11]{schmudgen17}. By combining the latter
condition with Theorem~\ref{thm:def_H}, the following statement immediately follows.

\begin{cor}
 Let $\mathcal{J}$ be a proper Jacobi matrix, $\mathcal{H}\in\R^{\infty,\infty}$ with columns in $\ell^{2}(\N_{0})$, and
 $\mathcal{J}\mathcal{H}=\mathcal{H}\mathcal{J}$. If there exists $\epsilon>0$ such that
 \begin{equation}
 \int_{\R}e^{\epsilon|x|}(|h(x)|+1)^{2}\dd\mu(x)<\infty,
 \label{eq:int_cond_H_proper}
 \end{equation}
 then $\mathcal{H}$ is proper and one has
 \[
  H=h(J).
 \]
 Here $\mu$, $U$, and $h$ are as in Theorem~\ref{thm:def_H}.
\end{cor}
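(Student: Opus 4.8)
The plan is to obtain this corollary as an immediate consequence of Theorem~\ref{thm:def_H}: it suffices to verify that the integrability hypothesis~\eqref{eq:int_cond_H_proper} forces $\C[x]$ to be dense in $L^{2}(\R,(|h|+1)^{2}\dd\mu)$, since properness of $\mathcal{H}$ together with the identity $H=h(J)$ then follow verbatim from that theorem. Accordingly I would set $\dd\nu:=(|h|+1)^{2}\dd\mu$ and reduce the whole matter to proving density of polynomials in $L^{2}(\R,\dd\nu)$ under the stated exponential moment bound.

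First I would check that $\nu$ is a legitimate measure to which the Hamburger moment machinery applies. Since $\epsilon>0$ gives $e^{\epsilon|x|}\geq1$, hypothesis~\eqref{eq:int_cond_H_proper} yields in particular $\nu(\R)\leq\int_{\R}e^{\epsilon|x|}\dd\nu(x)<\infty$, so $\nu$ is finite; and because $|x|^{n}\leq C_{n}e^{\epsilon|x|}$ for a suitable constant $C_{n}$ (as $|x|^{n}e^{-\epsilon|x|}\to0$ when $|x|\to\infty$), all moments of $\nu$ are finite. Thus the Hamburger moment problem attached to $\nu$ is well posed. Now I would invoke the sufficient condition recalled just above the corollary: since $\int_{\R}e^{\epsilon|x|}\dd\nu(x)<\infty$, this moment problem is determinate, whence $\nu$ is N-extremal, and therefore, by M.~Riesz's theorem, $\C[x]$ is dense in $L^{2}(\R,\dd\nu)=L^{2}(\R,(|h|+1)^{2}\dd\mu)$. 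Feeding this density into Theorem~\ref{thm:def_H} gives that $\mathcal{H}$ is proper and that $H=h(J)$, as claimed.

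There is no genuine obstacle here; the corollary is essentially a repackaging of Theorem~\ref{thm:def_H} with the quoted determinacy criterion. The only point requiring a little care is the verification that the weighted measure $\nu$ is finite and has all moments finite, so that the moment-problem framework (and hence the chain determinacy $\Rightarrow$ N-extremality $\Rightarrow$ density of polynomials) genuinely applies; this is exactly what the exponential weight in~\eqref{eq:int_cond_H_proper} secures. One should also note that $h=U\mathcal{H}e_{0}$ is only defined $\mu$-almost everywhere, but since $\nu$ is absolutely continuous with respect to $\mu$ this ambiguity does not affect either the integral in~\eqref{eq:int_cond_H_proper} or the density statement.
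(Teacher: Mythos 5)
Your proof is correct and follows exactly the route the paper intends: the paper derives the corollary by remarking that it ``immediately follows'' from combining the exponential-moment determinacy criterion with Theorem~\ref{thm:def_H}, and your argument simply fills in those implicit steps (finiteness of all moments of $\nu=(|h|+1)^{2}\dd\mu$, determinacy, N-extremality, Riesz density). No gaps; the added care about $h$ being defined only $\mu$-almost everywhere is a reasonable, harmless refinement.
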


\section{Proofs of the main results}\label{sec:proofs}

In this section, the theorems of Section~\ref{sec:main} are proven. Note that every Jacobi matrix that 
appears in the subsections below is proper. It can be verified, for example, by using the Carleman condition:
\[
 \sum_{n=0}^{\infty}\frac{1}{|a_{n}|}=\infty
\]
in each of the cases; see~\cite[Prob.~1, p.~24]{akhiezer90}.
In fact, each of the Jacobi matrices corresponds to a family of hypergeometric orthogonal polynomials from the Askey scheme 
and it is well known that no Hamburger moment problem associated with a family of hypergeometric orthogonal polynomials from 
the Askey scheme is in the indeterminate case~\cite{ismail05}. This is not the case for the polynomials from the $q$-Askey scheme, however.

\subsection{Proof of Theorem~\ref{thm:meixpol_lag_meix}}

First, one immediately checks that the columns of $\mathcal{H}^{(1)}$ defined by~\eqref{eq:def_hank1}
belong to $\ell^{2}(\N_{0})$. Indeed, for $n\in\N_{0}$ fixed, the Stirling formula yields
\[
 \mathcal{H}^{(1)}_{m,n}=\frac{k^{n+m}m^{n+(\alpha-1)/2}}{\sqrt{n!\,\Gamma(n+\alpha)}}\left(1+o(1)\right), \quad \mbox{ as }\; m\to\infty.
\]
Thus, for $k\in(0,1)$ and $\alpha>0$, every column of $\mathcal{H}^{(1)}$ represents a square summable sequence.

Put
\begin{equation}
 b_{n}:=n \quad \mbox{ and } \quad a_{n}:=-k\sqrt{(n+1)(n+\alpha)},
\label{eq:def_a_b_hankel1}
\end{equation}
for $n\in\N_{0}$. By substituting from~\eqref{eq:def_a_b_hankel1} and taking $\kappa_{n}:=-k$ 
in~\eqref{eq:commut_eq_reform}, one gets the difference equation
\[
 (m-n)h_{m+n}-k(m-n)(m+n-1+\alpha)h_{m+n-1}=0.
\]
After canceling the term $(m-n)$ in the above equation, we arrive at a difference equation entirely in $m+n$.
Hence, if we denote $\ell:=m+n$, we obtain the first order ordinary difference equation
\[
 h_{\ell}-k(\ell-1+\alpha)h_{\ell-1}=0, \quad \ell\in\N.
\]
Its solution reads
\begin{equation}
 h_{\ell}=k^{\ell}(\alpha)_{\ell}\,h_{0}=k^{\ell}\Gamma(\ell+\alpha), \quad \ell\in\N_{0},
\label{eq:h_n_hank1_meixpol}
\end{equation}
where we put $h_{0}:=\Gamma(\alpha)$. The weights can be computed readily from~\eqref{eq:w_kappa} getting
\begin{equation}
 w_{n}=\frac{w_{0}}{\sqrt{n!\,(\alpha)_{n}}}=\frac{1}{\sqrt{n!\,\Gamma(n+\alpha)}}, \quad n\in\N_{0},
\label{eq:w_n_hank1_meixpol}
\end{equation}
where we chose $w_{0}:=1/\sqrt{\Gamma(\alpha)}$.
In summary, the weighted Hankel matrix with elements $w_{m}w_{n}h_{m+n}$, $m,n\in\N_{0}$, coincides with $\mathcal{H}^{(1)}$ from~\eqref{eq:def_hank1}
and it commutes with the Jacobi matrix~$\mathcal{J}$ determined by~\eqref{eq:def_a_b_hankel1} for any $k\in(0,1)$ and $\alpha>0$.

\subsubsection{The case (i)}
Recall the Meixner--Pollaczek polynomials~\eqref{eq:def_meix-polllac} and put 
\begin{equation}
\hat{P}_{n}(x):=\left(\frac{\Gamma(2\lambda)\,n!}{\Gamma(n+2\lambda)}\right)^{\!1/2}P_{n}^{(\lambda)}\!\left(x\cot\phi\,;\phi\right)\!, \quad n\in\N_{0},
\label{eq:def_hatP_hank1_meixpol}
\end{equation}
and $\hat{P}_{-1}(x):=0$. The three-term recurrence relation for the Meixner--Pollaczek polynomials~\cite[Eq.~9.7.3]{koekoek-etal_10} yields
\[
 \sqrt{(n+1)(n+2\lambda)}\hat{P}_{n+1}(x)-2\cos(\phi)(n+\lambda+x)\hat{P}_{n}(x)+\sqrt{n(n-1+2\lambda)}\hat{P}_{n-1}(x)=0,
\]
for $n\in\N_{0}$. Hence, denoting by~$\mathcal{I}$ the identity matrix, the vector $(\hat{P}_{0}(x),\hat{P}_{1}(x),\hat{P}_{2}(x),\dots)$
is a formal eigenvector of the Jacobi matrix $-\mathcal{J}-\lambda\mathcal{I}$ corresponding to the eigenvalue $x$, 
if we identify the parameters according to the equalities
\begin{equation}
 \alpha=2\lambda \quad \mbox{ and } \quad k=\frac{1}{2\cos\phi}
\label{eq:param_iden_meixpol}
\end{equation}
and restrict $\phi\in(0,\pi/3)$.

By Carleman's condition, $\mathcal{J}$ determines a unique Jacobi operator $J$ and
$\{\hat{P}_{n} \mid n\in\N_{0}\}$ is an orthonormal basis of $L^{2}(\R,\dd\mu)$
with
\begin{equation}
\frac{\dd\mu}{\dd x}(x)=\frac{\cos\phi}{\pi\Gamma(2\lambda)}\left(2\sin\phi\right)^{2\lambda-1}
e^{(2\phi-\pi)x\cot(\phi)}\!\left|\Gamma\left(\lambda+\ii x\cot(\phi)\right)\right|^{2},
\label{eq:mu_hank1_meixpol}
\end{equation}
for $x\in\R$. The above measure was deduced from the measure of orthogonality for the Meixner--Pollaczek polynomials~\cite[Eq.~9.7.2]{koekoek-etal_10}.
By applying~\eqref{eq:param_iden_meixpol} and the identity $2\arccos(x)=\pi-2\arcsin(x)$ in~\eqref{eq:mu_hank1_meixpol},
we deduce the formula for $\mu$ in terms of the parameters~$\alpha$ and~$k$ from the case~(i) of Theorem~\ref{thm:meixpol_lag_meix}. Similarly, if $\hat{P}_{n}$ defined by \eqref{eq:def_hatP_hank1_meixpol} is expressed in terms of~$\alpha$ and~$k$, we get the respective formula from Theorem~\ref{thm:meixpol_lag_meix} in case~(i).

Next, we compute the function $h$ according to Theorem~\ref{thm:def_H} applied to the matrices $\mathcal{H}^{(1)}$ and $-\mathcal{J}-\lambda\mathcal{I}$.
Clearly, since $\mathcal{H}^{(1)}$ and $\mathcal{J}$ commute, $\mathcal{H}^{(1)}$ commutes with $-\mathcal{J}-\lambda\mathcal{I}$, too.
By the formula from Theorem~\ref{thm:def_H}, one has
\[
h(x)=\sum_{n=0}^{\infty}\mathcal{H}_{n,0}^{(1)}\hat{P}_{n}(x)=w_{0}\sum_{n=0}^{\infty}w_{n}h_{n}\hat{P}_{n}(x).
\]
By substituting from~\eqref{eq:h_n_hank1_meixpol}, \eqref{eq:w_n_hank1_meixpol}, and \eqref{eq:def_hatP_hank1_meixpol}
in the equation above and taking also~\eqref{eq:param_iden_meixpol} into account, we obtain
\begin{equation}
h(x)=\sum_{n=0}^{\infty}(2\cos(\phi))^{-n}\, P_{n}^{(\lambda)}\!\left(x\cot(\phi);\phi\!\right)
 = \left(2\cos(\phi)\right)^{2\lambda}e^{2x\phi\cot(\phi)}.
\label{eq:h_hank1_meixpol}
\end{equation}
The last equality holds due to the generating function formula~\cite[Eq.~9.7.11]{koekoek-etal_10}
\begin{equation}
\sum_{n=0}^{\infty}P_{n}^{(\lambda)}(x;\phi)\, t^{n}=
\left(1-e^{\ii\phi}t\right)^{-\lambda+\ii x}\left(1-e^{-\ii\phi}t\right)^{-\lambda-\ii x}.
\label{eq:gener_func_meix-pol}
\end{equation}
By using~\eqref{eq:param_iden_meixpol} again, we can express the right-hand side of~\eqref{eq:h_hank1_meixpol} 
in terms of the original parameters $\alpha$ and $k$ which yields the formula for $h$ from the case~(i).

Finally, to conclude that $\mathcal{H}^{(1)}$ determines a unique self-adjoint operator $H^{(1)}$ and $H^{(1)}=h(J)$, it suffices to
verify the condition~\eqref{eq:int_cond_H_proper}. We make use of the asymptotic expansion~\cite[Eq.~5.11.9]{dlmf}
\begin{equation}
 |\Gamma(x+\ii y)|^{2}=2\pi|y|^{2x-1}e^{-\pi|y|}\left(1+o(1)\right), \quad y\to\pm\infty,
\label{eq:gamma_asympt_imag}
\end{equation}
where $x\in\R$. Then, recalling formulas~\eqref{eq:mu_hank1_meixpol} and~\eqref{eq:h_hank1_meixpol},
one easily shows that there is a constant $C_{\lambda,\phi}>0$ not depending on $x$ such that
\[
 (|h(x)|+1)^{2}\,\frac{\dd\mu}{\dd x}(x)\leq C_{\lambda,\phi}|x|^{2\lambda-1}e^{2|x|(3\phi-\pi)\cot(\phi)},
\]
for $|x|$ sufficiently large. Since $0<\phi<\pi/3$, one can find $\epsilon>0$ small enough such that the condition~\eqref{eq:int_cond_H_proper} is fulfilled.

In total, the unitary mapping
\[
 U:\ell^{2}(\N_{0})\to L^{2}(\R,\dd\mu): e_{n}\mapsto\hat{P}_{n}
\]
transforms $H^{(1)}$ to the multiplication operator by the function $h$ on $L^{2}(\R,\dd\mu)$. As an immediate 
consequence of formulas~\eqref{eq:mu_hank1_meixpol} and~\eqref{eq:h_hank1_meixpol}, one shows that
\[
 \spec\left(H^{(1)}\right)=\spec_{\text{ac}}\left(H^{(1)}\right)=\overline{h(\R)}=[\,0,+\infty).
\]

\subsubsection{The case (ii)}
We define in terms of the Laguerre polynomials~\eqref{eq:def_laguerre}
\begin{equation}
\hat{P}_{n}(x):=\left(\frac{\Gamma(\alpha)\,n!}{\Gamma(n+\alpha)}\right)^{\!1/2}L^{(\alpha-1)}_{n}(2x), \quad n\in\N_{0},
\label{eq:def_hatP_hank1_laguerre}
\end{equation}
and $\hat{P}_{-1}(x):=0$. It follows from the three-term recurrence relation for the Laguerre polynomials~\cite[Eq.~9.12.3]{koekoek-etal_10} that
\[
 \sqrt{(n+1)(n+\alpha)}\hat{P}_{n+1}(x)-(2n+\alpha-2x)\hat{P}_{n}(x)+\sqrt{n(n-1+\alpha)}\hat{P}_{n-1}(x)=0,
\]
Consequently, the vector $(\hat{P}_{0}(x),\hat{P}_{1}(x),\hat{P}_{2}(x),\dots)$ is a formal eigenvector of the 
Jacobi matrix $\mathcal{J}+\frac{\alpha}{2}\mathcal{I}$ corresponding to the eigenvalue $x$, 
where the entries of $\mathcal{J}$ are given by~\eqref{eq:def_a_b_hankel1} with $k=1/2$.

The matrix $\mathcal{J}$ determines a unique Jacobi operator $J$ and $\{\hat{P}_{n} \mid n\in\N_{0}\}$ is an orthonormal basis of $L^{2}(\R,\dd\mu)$, where $\mu$
is an absolutely continuous measure supported on $[0,\infty)$ with the density
\begin{equation}
\frac{\dd\mu}{\dd x}(x)=\frac{2^{\alpha}}{\Gamma(\alpha)}x^{\alpha-1}e^{-2x}, \quad x\in(0,\infty),
\label{eq:mu_hank1_laguerre}
\end{equation}
which follows from the orthogonality relation for the Laguerre polynomials~\cite[Eq.~9.12.2]{koekoek-etal_10}.
This yields the formula for the measure~$\mu$ stated in case~(ii) of Theorem~\ref{thm:meixpol_lag_meix}
as well as~\eqref{eq:def_hatP_hank1_laguerre} coincides with the formula for $\hat{P}_{n}$ from Theorem~\ref{thm:meixpol_lag_meix} case~(ii).

According to Theorem~\ref{thm:def_H} applied to the commuting matrices $\mathcal{H}^{(1)}$ and $\mathcal{J}+\frac{\alpha}{2}\mathcal{I}$,
the function $h$ reads
\[
h(x)=\sum_{n=0}^{\infty}\mathcal{H}_{n,0}^{(1)}\hat{P}_{n}(x)=w_{0}\sum_{n=0}^{\infty}w_{n}h_{n}\hat{P}_{n}(x)=\sum_{n=0}^{\infty}2^{-n}L_{n}^{(\alpha-1)}(2x),
\]
where we have used formulas~\eqref{eq:h_n_hank1_meixpol}, \eqref{eq:w_n_hank1_meixpol} with $k=1/2$, and~\eqref{eq:def_hatP_hank1_laguerre}.
By making use of the generating function formula for the Laguerre polynomials~\cite[Eq.~9.12.10]{koekoek-etal_10}
\[
\sum_{n=0}^{\infty}L_{n}^{(\alpha)}(x)t^{n}=(1-t)^{-\alpha-1}\exp\!\left(\frac{tx}{t-1}\right)\!,
\]
we obtain
\begin{equation}
 h(x)=2^{\alpha}e^{-2x}, \quad x\in(0,\infty),
\label{eq:h_hank1_laguerre}
\end{equation}
which is the formula for $h$ from case~(ii) of Theorem~\ref{thm:meixpol_lag_meix}. Since $h$ is bounded, the verification of the condition~\eqref{eq:int_cond_H_proper} with $\mu$ given by~\eqref{eq:mu_hank1_laguerre}
is immediate. As a direct consequence of formulas~\eqref{eq:mu_hank1_laguerre} and~\eqref{eq:h_hank1_laguerre}, one has
\[
\spec\left(H^{(1)}\right)=\spec_{\text{ac}}\left(H^{(1)}\right)=\overline{h((0,\infty))}=[0,2^{\alpha}]\quad\mbox{ and }\quad \|H^{(1)}\|=2^{\alpha},
\]
provided that $k=1/2$.

\subsubsection{The case (iii)}
In the case when $k\in(0,1/2)$, we put
\begin{equation}
\hat{P}_{n}(x):=c^{n/2}\left(\frac{(\beta)_{n}}{n!}\right)^{\!1/2}M_{n}\left(\frac{1+c}{1-c}x;\beta,c\right), \quad n\in\N_{0},
\label{eq:def_hatP_hank1_meixner}
\end{equation}
and $\hat{P}_{-1}(x):=0$, where $M_{n}$ denotes the Meixner polynomials~\eqref{eq:def_meixner}.

The three-term recurrence relation for the Meixner polynomials~\cite[Eq.~9.10.3]{koekoek-etal_10} implies that
\[
 \sqrt{c(n+1)(n+\beta)}\hat{P}_{n+1}(x)-\left((1+c)(n-x)+c\beta\right)\hat{P}_{n}(x)+\sqrt{cn(n-1+\beta)}\hat{P}_{n-1}(x)=0,
\]
for $n\in\N_{0}$. Thus, the vector $(\hat{P}_{0}(x),\hat{P}_{1}(x),\hat{P}_{2}(x),\dots)$ is a formal eigenvector of the Jacobi matrix $\mathcal{J}+\frac{c\beta}{1+c}\mathcal{I}$ 
corresponding to the eigenvalue $x$, where the entries of $\mathcal{J}$ are given by~\eqref{eq:def_a_b_hankel1} and the parameters are identified by equalities
\begin{equation}
 \alpha=\beta \quad \mbox{ and } \quad k=\frac{\sqrt{c}}{1+c},
\label{eq:param_iden_meixner}
\end{equation}
with $\beta>0$ and $c\in(0,1)$. In addition, $\{\hat{P}_{n} \mid n\in\N_{0}\}$ is an orthonormal basis of $L^{2}(\R,\dd\mu)$, with 
\begin{equation}
\mu=(1-c)^{\beta}\sum_{n=0}^{\infty}\frac{(\beta)_{n}}{n!}c^{n}\delta_{n(1-c)/(1+c)},
\label{eq:mu_hank1_meixner}
\end{equation}
as it follows from the orthogonality relation for the Meixner polynomials~\cite[Eq.~9.10.2]{koekoek-etal_10}.

Similarly as before, Theorem~\ref{thm:meixpol_lag_meix} together with formulas~\eqref{eq:h_n_hank1_meixpol}, \eqref{eq:w_n_hank1_meixpol}, \eqref{eq:param_iden_meixner}, 
and~\eqref{eq:def_hatP_hank1_meixner} yield
\begin{equation}
 h(x)=w_{0}\sum_{n=0}^{\infty}w_{n}h_{n}\hat{P}_{n}(x)=\sum_{n=0}^{\infty}\frac{(\beta)_{n}}{n!}\!\left(\frac{c}{1+c}\right)^{\!n}\!M_{n}\left(\frac{1+c}{1-c}x;\beta,c\right)=
 (1+c)^{\beta}c^{x(1+c)/(1-c)},
\label{eq:h_hank1_meixner}
\end{equation}
where we have used the generating function formula~\cite[Eq.~9.10.11]{koekoek-etal_10}
\[
 \sum_{n=0}^{\infty}\frac{(\beta)_{n}}{n!}M_{n}(x;\beta,c)t^{n}=\left(1-\frac{t}{c}\right)^{x}(1-t)^{-x-\beta}.
\]

The verification of the condition~\eqref{eq:int_cond_H_proper} is again straightforward here. Alternatively, one can use the general operator theory in this case
since one can readily show that the matrix~\eqref{eq:def_hank1} determines bounded and hence unique self-adjoint operator on~$\ell^{2}(\N_{0})$. To do so, one can
check, for instance, that
\[
 \sum_{m,n=0}^{\infty}\left|\mathcal{H}^{(1)}_{m,n}\right|<\infty,
\]
for $k\in(0,1/2)$, which implies that $H^{(1)}$ is actually a trace class operator. Since the Jacobi operator $J$ has simple spectrum and commutes with~$H^{(1)}$, it has to hold that $H^{(1)}=h(J)$
for a Borel function $h$. This follows from a general argument, see, for instance, \cite[Lemma~6.4]{varadarajan85} or \cite[Prop.~1.9, Suppl.~1]{berezin-shubin91}.
Then the function $h$ can be computed as in~\eqref{eq:h_hank1_meixner}.

Noticing that $0$ is not an eigenvalue of $T_{h}$, it follows from the obtained formulas~\eqref{eq:mu_hank1_meixner} and~\eqref{eq:h_hank1_meixner} that, for $k\in(0,1/2)$, $H^{(1)}$ is a trace class operator with 
\[
\spec\left(H^{(1)}\right)\setminus\{0\}=\spec_{\text{p}}\left(H^{(1)}\right)=h\left(\frac{1-c}{1+c}\N_{0}\right)=(1+c)^{\beta}c^{\N_{0}}
\]
and $\|H^{(1)}\|=(1+c)^{\beta}$. If the second relation in~\eqref{eq:param_iden_meixner} is inverted, one gets
\[
 c=\frac{1-2k^{2}-\sqrt{1-4k^{2}}}{2k^{2}}.
\]
By using the above equation to express~\eqref{eq:def_hatP_hank1_meixner}, \eqref{eq:mu_hank1_meixner}, and~\eqref{eq:h_hank1_meixner} as well as the consequences for the spectrum of $H^{(1)}$ 
in terms of the parameters $\alpha$ and $k$, one arrives at the formulas from Theorem~\ref{thm:meixpol_lag_meix} in case~(iii).

\subsection{Proof of Theorem~\ref{thm:meixpol}}

First, a straightforward application of the Stirling formula shows that
\[
\mathcal{H}_{m,n}^{(2)}=(-1)^{m(m-1)/2+n(n-1)/2}\sqrt{\frac{\Gamma(n+2\lambda)}{n!}}2^{2\lambda}m^{-\lambda-1/2}\left(1+o(1)\right), \quad \mbox{ as } m\to\infty,
\]
for $m+n$ even and $n\in\N_{0}$ fixed. Hence the columns of $\mathcal{H}^{(2)}$ are square summable for $\lambda>0$.

Second, consider the Jacobi matrix $\mathcal{J}$ determined  by the sequences
\[
 b_{n}:=0 \quad \mbox{ and } \quad a_{n}:=\frac{1}{2}\sqrt{(n+1)(n+2\lambda)},
\]
for $n\in\N_{0}$. By setting $\kappa_{n}=(-1)^{n}(n+2\lambda)/2$ in~\eqref{eq:commut_eq_reform}, one obtains the equation
\[
 \left(m-(-1)^{m+n}n\right)h_{m+n-1}-\left(m+2\lambda-(-1)^{m+n}(n+2\lambda)\right)h_{m+n+1}=0.
\]
When the parity of $m+n$ is distinguished, one arrives at difference equations entirely in $m+n$. 
This leads to two ordinary difference equations
\[
 h_{2k-1}-h_{2k+1}=0 \quad \mbox{ and } \quad (2k-1)h_{2k-2}-(4\lambda+2k-1)h_{2k}=0,
\]
where $k\in\N$. A solution reads
\begin{equation}
 h_{2k+1}=0 \quad \mbox{ and } \quad h_{2k}=\frac{\Gamma\left(k+1/2\right)}{\Gamma\left(2\lambda+k+1/2\right)},
\label{eq:h_n_hank2}
\end{equation}
for $k\in\N$. Further, it follows from~\eqref{eq:w_kappa} that the weights read 
\begin{equation}
 w_{n}=(-1)^{n(n-1)/2}\sqrt{\frac{\Gamma(n+2\lambda)}{n!}},
\label{eq:w_n_hank2}
\end{equation}
for $n\in\N_{0}$, where we chose $w_{0}:=\sqrt{\Gamma(2\lambda)}$. Using the obtained formulas~\eqref{eq:h_n_hank2} and~\eqref{eq:w_n_hank2},
$w_{n}w_{m}h_{m+n}$ coincides with $\mathcal{H}_{m,n}^{(2)}$ for all $m,n\in\N_{0}$. Hence $\mathcal{H}^{(2)}$ and $\mathcal{J}$ commute.

For $\lambda>0$, put
\begin{equation}
\hat{P}_{n}(x):=\left(\frac{\Gamma(2\lambda)\,n!}{\Gamma(n+2\lambda)}\right)^{\!1/2}P_{n}^{(\lambda)}\left(x;\frac{\pi}{2}\right)\!, \quad n\in\N_{0},
\label{eq:def_hatP_hank2}
\end{equation}
and $\hat{P}_{-1}(x):=0$, where $P_{n}^{(\lambda)}$ are the Meixner--Pollaczek polynomials~\eqref{eq:def_meix-polllac}.  The three-term recurrence relation for the Meixner--Pollaczek polynomials~\cite[Eq.~9.7.3]{koekoek-etal_10} yields
\[
 \sqrt{(n+1)(n+2\lambda)}\hat{P}_{n+1}(x)-2x\hat{P}_{n}(x)+\sqrt{n(n-1+2\lambda)}\hat{P}_{n-1}(x)=0,
\]
for $n\in\N_{0}$. Thus, the vector $(\hat{P}_{0}(x),\hat{P}_{1}(x),\hat{P}_{2}(x),\dots)$
is a formal eigenvector of the Jacobi matrix $\mathcal{J}$ corresponding to the eigenvalue $x$. Furthermore, since $\mathcal{J}$ is proper by Carleman's condition,
$\{\hat{P}_{n} \mid n\in\N_{0}\}$ is an orthonormal basis of $L^{2}(\R,\dd\mu)$ with
\begin{equation}
 \frac{\dd\mu}{\dd x}(x)=\frac{2^{2\lambda-1}}{\pi\Gamma(2\lambda)}\left|\Gamma(\lambda+\ii x)\right|^{2}, \quad x\in\R,
\label{eq:mu_hank2}
\end{equation}
where the measure $\mu$ was deduced from the orthogonality relation for the Meixner--Pollaczek polynomials~\cite[Eq.~9.7.2]{koekoek-etal_10}.

Using the formula for $h$ from Theorem~\ref{thm:def_H}, we get
\begin{equation}
h(x)=\sum_{n=0}^{\infty}\mathcal{H}_{n,0}^{(2)}\hat{P}_{n}(x)=w_{0}\sum_{n=0}^{\infty}w_{n}h_{n}\hat{P}_{n}(x)=
\Gamma(2\lambda)\sum_{n=0}^{\infty}\frac{(-1)^{n}\Gamma\left(n+1/2\right)}{\Gamma\left(n+2\lambda+1/2\right)}P_{2n}^{(\lambda)}\left(x;\frac{\pi}{2}\right)\!,
\label{eq:h_hank2_pre}
\end{equation}
where we have used formulas~\eqref{eq:h_n_hank2}, \eqref{eq:w_n_hank2}, and~\eqref{eq:def_hatP_hank2}. This time, the right-hand side of~\eqref{eq:h_hank2_pre} cannot be readily simplified
by using a generating function for the Meixner--Pollaczek polynomials. Nevertheless, it can be expressed in terms of the Gamma function as shows the next statement.

\begin{lem}\label{lem:h_hank2}
 For $x\in\R$, $\lambda>0$, and $h$ defined in~\eqref{eq:h_hank2_pre}, one has
 \begin{equation}
   h(x)=\frac{2^{2\lambda-1}}{\Gamma(2\lambda)}|\Gamma(\lambda+\ii x)|^{2}.
 \label{eq:h_hank2}
 \end{equation}
 Consequently, $h(\R)=(0,h(0)]$.
\end{lem}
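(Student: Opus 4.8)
The plan is to evaluate the series
\[
h(x)=\Gamma(2\lambda)\sum_{n=0}^{\infty}\frac{(-1)^{n}\Gamma(n+1/2)}{\Gamma(n+2\lambda+1/2)}P_{2n}^{(\lambda)}\!\left(x;\tfrac{\pi}{2}\right)
\]
in closed form. The obstruction is that, unlike cases~(i)--(iii), no single generating function is directly applicable, because only the \emph{even} Meixner--Pollaczek polynomials appear and the coefficients $(-1)^{n}\Gamma(n+1/2)/\Gamma(n+2\lambda+1/2)$ are not the coefficients in the standard generating function~\eqref{eq:gener_func_meix-pol}. The first step I would take is to rewrite the gamma-ratio coefficient as a Beta integral,
\[
\frac{\Gamma(n+1/2)}{\Gamma(n+2\lambda+1/2)}=\frac{1}{\Gamma(2\lambda)}\int_{0}^{1}t^{\,n-1/2}(1-t)^{2\lambda-1}\,\dd t,
\]
valid for $\lambda>0$. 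Substituting this into the series and interchanging summation and integration (justified by absolute convergence, since $P_{2n}^{(\lambda)}(x;\pi/2)$ grows only polynomially in $n$ for fixed $x$ while the integrand supplies the decay) would reduce the problem to summing $\sum_{n}P_{2n}^{(\lambda)}(x;\pi/2)(-t)^{n}$ against the Beta weight.

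The key to the inner sum is to use the generating function~\eqref{eq:gener_func_meix-pol} at $\phi=\pi/2$, where $e^{\ii\phi}=\ii$, to obtain the even part. Setting $s^{2}=-t$ and averaging the generating function at $s$ and $-s$ extracts precisely the even-indexed terms, giving
\[
\sum_{n=0}^{\infty}P_{2n}^{(\lambda)}\!\left(x;\tfrac{\pi}{2}\right)(-t)^{n}
=\tfrac{1}{2}\left[(1-\ii s)^{-\lambda+\ii x}(1+\ii s)^{-\lambda-\ii x}+(1+\ii s)^{-\lambda+\ii x}(1-\ii s)^{-\lambda-\ii x}\right].
\]
With $s=\sqrt{t}$ one has $(1\mp\ii s)=\sqrt{1+t}\,e^{\mp\ii\arctan\sqrt{t}}$, so each bracketed term simplifies to $(1+t)^{-\lambda}e^{\pm 2\ii x\arctan\sqrt{t}}$, and the average collapses to $(1+t)^{-\lambda}\cos\!\left(2x\arctan\sqrt{t}\right)$. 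This is the main computational hurdle, but it is a routine manipulation of complex powers once the even-part extraction is set up.

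After these reductions the task is the single integral
\[
h(x)=\int_{0}^{1}t^{-1/2}(1-t)^{2\lambda-1}(1+t)^{-\lambda}\cos\!\left(2x\arctan\sqrt{t}\right)\dd t,
\]
and I would finish by the substitution $t=\tan^{2}\theta$, $\theta\in(0,\pi/4)$, which turns $(1-t)/(1+t)=\cos 2\theta$ and $(1+t)=\sec^{2}\theta$ and $\arctan\sqrt{t}=\theta$, converting the integral into a trigonometric form of the type
\[
h(x)=2\int_{0}^{\pi/4}(\cos 2\theta)^{2\lambda-1}\cos(2x\theta)\,\dd\theta
\]
(up to the elementary Jacobian), a standard Beta-type integral whose value is known to be $2^{2\lambda-1}|\Gamma(\lambda+\ii x)|^{2}/\Gamma(2\lambda)$ by~\cite[Eq.~9.7.2]{koekoek-etal_10} or a direct evaluation via the $\cos 2\theta=e^{2\ii\theta}$ binomial expansion. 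This yields~\eqref{eq:h_hank2}. For the final assertion $h(\R)=(0,h(0)]$, I would note from~\eqref{eq:h_hank2} that $h$ is manifestly positive, even in $x$, and that $|\Gamma(\lambda+\ii x)|^{2}$ is strictly decreasing for $x>0$ with limit $0$ as $|x|\to\infty$; hence $h$ attains its maximum $h(0)=2^{2\lambda-1}\Gamma(\lambda)^{2}/\Gamma(2\lambda)=\sqrt{\pi}\,\Gamma(\lambda)/\Gamma(\lambda+1/2)$ and decreases to $0$, so its range is exactly $(0,h(0)]$.
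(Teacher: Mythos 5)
Your overall strategy is the paper's own: convert the ratio $\Gamma(n+1/2)/\Gamma(n+2\lambda+1/2)$ into a Beta integral, interchange sum and integral, extract the even part of the generating function~\eqref{eq:gener_func_meix-pol} at $\phi=\pi/2$, and evaluate the resulting single integral. However, the execution of the even-part step contains a sign error that invalidates everything downstream. The series you must sum is $\sum_{n}P_{2n}^{(\lambda)}(x;\pi/2)\,(-t)^{n}$, and you correctly announce $s^{2}=-t$, i.e.\ $s=\ii\sqrt{t}$ is purely \emph{imaginary}; but you then proceed ``with $s=\sqrt{t}$'' real, which computes $\sum_{n}P_{2n}^{(\lambda)}(x;\pi/2)\,t^{n}$ instead --- the alternating sign, which is essential here, is silently dropped. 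With the correct imaginary $s$ the factors $1\mp\ii s$ become the \emph{real} numbers $1\pm\sqrt{t}$, and one gets
\begin{equation*}
\sum_{n=0}^{\infty}P_{2n}^{(\lambda)}\!\left(x;\tfrac{\pi}{2}\right)(-t)^{n}
=\Re\!\left[\bigl(1+\sqrt{t}\,\bigr)^{-\lambda+\ii x}\bigl(1-\sqrt{t}\,\bigr)^{-\lambda-\ii x}\right]
=(1-t)^{-\lambda}\cos\!\bigl(2x\,\mathrm{artanh}\sqrt{t}\,\bigr),
\end{equation*}
a hyperbolic, not circular, structure. (Your display is also wrong on its own terms: for real $s$, $(1-\ii s)^{-\lambda+\ii x}(1+\ii s)^{-\lambda-\ii x}=(1+s^{2})^{-\lambda}e^{2x\arctan s}$ is a \emph{real} exponential, so the average of the two bracketed terms would be $(1+t)^{-\lambda}\cosh(2x\arctan\sqrt{t})$, not a cosine.)

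The error is easy to falsify numerically at $\lambda=1/2$, $x=0$, where the lemma gives $h(0)=\pi$: your intermediate integral is $\int_{0}^{1}t^{-1/2}(1+t)^{-1/2}\,\dd t=2\ln(1+\sqrt{2})\approx 1.76$, and your final integral is $2\int_{0}^{\pi/4}\dd\theta=\pi/2$ --- both wrong, and mutually inconsistent, because the substitution $t=\tan^{2}\theta$ also drops a factor $(\cos\theta)^{-2\lambda}$ that is not ``an elementary Jacobian.'' Indeed no cosine transform over a finite interval could produce the exponentially decaying function $|\Gamma(\lambda+\ii x)|^{2}$. The repair is to keep the alternating sign: then the integrand becomes $t^{-1/2}(1-t)^{2\lambda-1}(1-t)^{-\lambda}\cos(2x\,\mathrm{artanh}\sqrt{t})=t^{-1/2}(1-t)^{\lambda-1}\cos(2x\,\mathrm{artanh}\sqrt{t})$, and the substitution $\sqrt{t}=\tanh u$ yields
\begin{equation*}
h(x)=2\int_{0}^{\infty}\frac{\cos(2xu)}{\cosh^{2\lambda}(u)}\,\dd u,
\end{equation*}
which is evaluated by the tabulated integral \cite[Eq.~3.985 ad~1]{gradsteyn-ryzhik07} to the stated formula --- exactly the paper's route (the paper works with $\xi=\sqrt{t}\in(-1,1)$ and $\xi=\tanh u$ directly, and justifies the Fubini step by the bound $|P_{n}^{(\lambda)}(x;\pi/2)|\leq C_{\lambda,x}n^{\lambda-1}$). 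Your closing argument that $h(\R)=(0,h(0)]$ is fine once~\eqref{eq:h_hank2} is actually established.
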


\begin{proof}
For $\phi=\pi/2$, the generating function~\eqref{eq:gener_func_meix-pol} gets the form
\[
 \sum_{n=0}^{\infty}P_{n}^{(\lambda)}\left(x;\frac{\pi}{2}\right)t^{n}=(1-\ii t)^{-\lambda+\ii x}(1+\ii t)^{-\lambda-\ii x}.
\]
It follows that
\begin{equation}
 \sum_{n=0}^{\infty}(-1)^{n}P_{2n}^{(\lambda)}\left(x;\frac{\pi}{2}\right)\xi^{2n}=\Re g(\xi)
\label{eq:gener_func_meix-pol_g}
\end{equation}
for $\xi\in\R$, $|\xi|<1$, where
\[
 g(\xi):=(1+\xi)^{-\lambda+\ii x}(1-\xi)^{-\lambda-\ii x}.
\]

By using elementary properties of the Beta and the Gamma function, one obtains
\[
 \int_{-1}^{1}\xi^{2n}\left(1-\xi^{2}\right)^{2\lambda-1}\dd\xi=B\left(2\lambda,n+1/2\right)
 =\frac{\Gamma(2\lambda)\Gamma\left(n+1/2\right)}{\Gamma\left(n+2\lambda+1/2\right)},
\]
for $\lambda>0$ and $n\in\N_{0}$. Thus, recalling~\eqref{eq:h_hank2_pre}, we can multiply~\eqref{eq:gener_func_meix-pol_g} 
by $(1-\xi^{2})^{2\lambda-1}$ and integrate with respect to~$\xi$ from $-1$ to $1$ getting
\begin{equation}
 h(x)=\int_{-1}^{1}\!\left(1-\xi^{2}\right)^{2\lambda-1}\Re g(\xi)\dd\xi
 =2\Re\int_{0}^{1}\!\left(1+\xi\right)^{\lambda-1+\ii x}\left(1-\xi\right)^{\lambda-1-\ii x}\dd\xi.
\label{eq:h_integral_hankel2_in_proof}
\end{equation}
The interchange of the integral and the sum can be justified by Fubini's theorem. Indeed, for any $x\in\R$ and $\lambda>0$, there exists a constant $C_{\lambda,x}>0$ such that
\[
 \left|P_{n}^{(\lambda)}\left(x;\frac{\pi}{2}\right)\right|\leq C_{\lambda,x}n^{\lambda-1}
\]
which follows from the asymptotic behavior of the Meixner--Pollaczek polynomials
\[
 P_{n}^{(\lambda)}\left(x;\frac{\pi}{2}\right)=n^{\lambda-1}\Re\left(\frac{2^{1-\lambda-\ii x}}{\Gamma(\lambda-\ii x)}\ii^{n}n^{-\ii x}\right)(1+o(1)),\quad \mbox{ as } n\to\infty,
\]
see~\cite[p.~172]{ismail05}. Now, it is clear that
\[
 \sum_{n=0}^{\infty}\left|P_{2n}^{(\lambda)}\left(x;\frac{\pi}{2}\right)\right|\int_{-1}^{1}\xi^{2n}\left(1-\xi^{2}\right)^{2\lambda-1}\dd\xi
 \leq C_{x,\lambda}2^{\lambda-1}\Gamma(2\lambda)\sum_{n=0}^{\infty}n^{\lambda-1}\frac{\Gamma\left(n+\frac{1}{2}\right)}{\Gamma\left(n+2\lambda+\frac{1}{2}\right)}<\infty,
\]
since
\[
 \frac{\Gamma\left(n+\frac{1}{2}\right)}{\Gamma\left(n+2\lambda+\frac{1}{2}\right)}=n^{-2\lambda}(1+o(1)), \quad \mbox{ as } n\to\infty,
\]
by Stirling's formula.

Next, we further rewrite the integral on the right-hand side of~\eqref{eq:h_integral_hankel2_in_proof} by substituting for
$\xi=\tanh(t)$ that results in the expression
\[
 h(x)=2\int_{0}^{\infty}\frac{\cos(2xt)}{\cosh^{2\lambda}(t)}\dd t.
\]
The above integral can be simplified with the aid of the identity~\cite[Eq.~3.985 ad~1]{gradsteyn-ryzhik07}
\[
 \int_{0}^{\infty}\frac{\cos(at)}{\cosh^{\nu}(\beta t)}\dd t=\frac{2^{\nu-2}}{\beta\Gamma(\nu)}\Gamma\left(\frac{\nu}{2}+\frac{\ii a}{2\beta}\right)\Gamma\left(\frac{\nu}{2}-\frac{\ii a}{2\beta}\right)\!,
\]
where $\Re\beta>0$, $\Re\nu>0$, and $a>0$, which yields~\eqref{eq:h_hank2}.

Since, for all $\lambda>0$ and $x\in\R$, one has~\cite[Eq.~5.6.6]{dlmf}
\[
 0<|\Gamma(\lambda+\ii x)|\leq\Gamma(\lambda)
\]
and
\[
 \lim_{x\to\pm\infty}\Gamma(\lambda+\ii x)=0
\]
by~\eqref{eq:gamma_asympt_imag}, it follows that $h(\R)=(0,h(0)]$.
\end{proof}

Formulas~\eqref{eq:def_hatP_hank2}, \eqref{eq:mu_hank2} and~\eqref{eq:h_hank2} are those stated in Theorem~\ref{thm:meixpol} which were to be proven.
It remains to show that the matrix $\mathcal{H}^{(2)}$ is proper. To this end, we may again verify the condition~\eqref{eq:int_cond_H_proper} for $\mu$ and $h$ given 
by~\eqref{eq:mu_hank2} and~\eqref{eq:h_hank2}. The verification is a straightforward use of the asymptotic formula~\eqref{eq:gamma_asympt_imag}.

It is a consequence of the formula~\eqref{eq:mu_hank2} and Lemma~\ref{lem:h_hank2} that the operator $H^{(2)}$ has the spectrum
\[
 \spec\left(H^{(2)}\right)=\spec_{\text{ac}}\left(H^{(2)}\right)=\overline{h(\R)}=[0,h(0)],
\]
where
\[
 h(0)=\|H^{(2)}\|=2^{2\lambda-1}\frac{\Gamma^{2}(\lambda)}{\Gamma(2\lambda)}=\frac{\sqrt{\pi}{\Gamma(\lambda)}}{\Gamma(\lambda+1/2)}.
\]
The last equality holds due to the identity~\cite[Eq.~5.5.5]{dlmf}
\[
 \Gamma(2z)=\frac{2^{2z-1}}{\sqrt{\pi}}\Gamma(z)\Gamma(z+1/2).
\]

\subsubsection{Proof of Corollary~\ref{cor:hank2eo}}
Since $\mathcal{H}_{m,n}^{(2)}=0$ if $m+n$ is odd, the orthogonal decomposition 
\[
\ell^{2}(\N_{0})=\ell^{2}(2\N_{0})\oplus\ell^{2}(2\N_{0}+1)
\]
is $H^{(2)}$-invariant. Consider another orthogonal decomposition of $L^{2}(\R,\dd\mu)$ as an orthogonal sum of the subspaces of even and odd functions,
\[
 L^{2}(\R,\dd\mu)=L^{2}_{\text{even}}(\R,\dd\mu)\oplus L^{2}_{\text{odd}}(\R,\dd\mu).
\]
Note that the unitary mapping $U:\ell^{2}(\N_{0})\to L^{2}(\R,\dd\mu): e_{n}\mapsto\hat{P}_{n}$ maps $\ell^{2}(2\N_{0})$ onto $L^{2}_{\text{even}}(\R,\dd\mu)$ and $\ell^{2}(2\N_{0}+1)$ onto $L^{2}_{\text{odd}}(\R,\dd\mu)$ since the polynomial $\hat{P}_{n}$ is even or odd if and only if the index $n$ is even or odd, respectively. Moreover, one can make use of the natural unitary equivalence of both $L^{2}_{\text{even}}(\R,\dd\mu)$ and $L^{2}_{\text{odd}}(\R,\dd\mu)$ with $L^{2}((0,\infty),\dd\mu)$. Restricting $H^{(2)}$ to $\ell^{2}(2\N_{0})$ and $\ell^{2}(2\N_{0}+1)$ yields Corollary~\ref{cor:hank2eo}.

\subsection{Proof of Theorem~\ref{thm:hermite} and Corollary~\ref{cor:hank3eo}}

First, an application of the Stirling formula implies
\[
\mathcal{H}_{m,n}^{(3)}=(-1)^{m(m-1)/2+n(n-1)/2}\frac{\pi^{1/4}}{\sqrt{n!}}2^{-\frac{m+n}{2}+\frac{1}{4}}m^{\frac{n}{2}-\frac{1}{4}}, \quad \mbox{ as } m\to\infty,
\]
for $m+n$ even and $n\in\N_{0}$ fixed. Consequently, columns of $\mathcal{H}^{(2)}$ represent square summable sequences.

Define~$\mathcal{J}$ by putting
\[
 b_{n}:=0 \quad \mbox{ and } \quad a_{n}:=\sqrt{(n+1)/2},
\]
for $n\in\N_{0}$. With this setting and putting $\kappa_{n}:=(-1)^{n}/\sqrt{2}$ in~\eqref{eq:commut_eq_reform}, one arrives at the equation
\[
 \left(m-(-1)^{m+n}n\right)h_{m+n-1}-\left(1-(-1)^{m+n}\right)h_{m+n+1}=0.
\]
Distinguishing between the parities of $m+n$, one obtains difference equations entirely in $m+n$ that takes the form
\[
 h_{2k-1}=0 \quad \mbox{ and } \quad (2k-1)h_{2k-2}-2h_{2k}=0,
\]
where $k\in\N$. A solution is
\begin{equation}
 h_{2k+1}=0 \quad \mbox{ and } \quad h_{2k}=\Gamma\left(k+1/2\right),
\label{eq:h_n_hank3}
\end{equation}
for $k\in\N$. Next, by using~\eqref{eq:w_kappa}, one gets 
\begin{equation}
 w_{n}=(-1)^{n(n-1)/2}\frac{1}{\sqrt{n!}},
\label{eq:w_n_hank3}
\end{equation}
for $n\in\N_{0}$, where we took $w_{0}:=1$. The resulting weighted Hankel matrix with elements $w_{n}w_{m}h_{m+n}$ coincides with $\mathcal{H}^{(3)}$ and thus 
$\mathcal{H}^{(3)}$ commutes with $\mathcal{J}$.

By using the three-term recurrence for Hermite polynomials~\cite[Eq.~9.15.3]{koekoek-etal_10}, one verifies that
\[
 \sqrt{n+1}\hat{P}_{n+1}(x)-\sqrt{2}x\hat{P}_{n}(x)+\sqrt{n}\hat{P}_{n-1}(x)=0, \quad n\in\N_{0},
\]
where
\begin{equation}
\hat{P}_{n}(x):=\frac{1}{\sqrt{2^{n}n!}}H_{n}(x), \quad n\in\N_{0},
\label{eq:def_hatP_hank3}
\end{equation}
and $\hat{P}_{-1}(x):=0$. Consequently, the vector $(\hat{P}_{0}(x),\hat{P}_{1}(x),\hat{P}_{2}(x),\dots)$
is a formal eigenvector of the Jacobi matrix $\mathcal{J}$ corresponding to the eigenvalue $x$. In addition, $\{\hat{P}_{n} \mid n\in\N_{0}\}$ 
is an orthonormal basis of $L^{2}(\R,\dd\mu)$, where
\begin{equation}
 \frac{\dd\mu}{\dd x}(x)=\frac{1}{\sqrt{\pi}}e^{-x^{2}}, \quad x\in\R,
\label{eq:mu_hank3}
\end{equation}
is the density of the measure of orthogonality for the Hermite polynomials~\cite[Eq.~9.15.2]{koekoek-etal_10}.

As far as the function $h$ is concerned, we have
\[
h(x)=\sum_{n=0}^{\infty}\mathcal{H}^{(3)}_{0,n}\hat{P}_{n}(x)=w_{0}\sum_{n=0}^{\infty}w_{n}h_{n}\hat{P}_{n}(x)=\sum_{n=0}^{\infty}\,\frac{(-1)^{n}}{2^{n}(2n)!}\,\Gamma\!\left(n+\frac{1}{2}\right)\! H_{2n}(x),
\]
where the formulas~\eqref{eq:h_n_hank3}, \eqref{eq:w_n_hank3}, and~\eqref{eq:def_hatP_hank3} were used. 
Since $H_{n}(-x)=(-1)^{n}H_{n}(x)$ and $\Gamma(n+1/2)=\sqrt{\pi}\,2^{-2n}(2n)!/n!$, the above expression 
for $h$ can be rewritten as
\begin{equation}
h(x)=\sqrt{\pi}\,\sum_{n=0}^{\infty}\frac{(-1)^{n}}{2^{3n}n!}\, H_{2n}(x)\,=\,\frac{\sqrt{\pi}}{2}\,\sum_{n=0}^{\infty}\frac{\ii^{n}}{2^{3n/2}\,[n/2]!}\big(H_{n}(x)+H_{n}(-x)\big)
 =\sqrt{2\pi}e^{-x^{2}},
\label{eq:h_hank3}
\end{equation}
where $[n/2]$ denotes the floor of $n/2$. The last equality holds due to the generating function formula~\cite[Eq.~9.15.14]{koekoek-etal_10}
\[
\sum_{n=0}^{\infty}\frac{t^{n}}{[n/2]!}\, H_{n}(x)=\frac{1+2xt+4t^{2}}{(1+4t^{2})^{3/2}}\exp\!\left(\frac{4x^{2}t^{2}}{1+4t^{2}}\right)\!.
\]

The obtained formulas~\eqref{eq:def_hatP_hank3}, \eqref{eq:mu_hank3}, and~\eqref{eq:h_hank3} for~$\hat{P}_{n}$, $h$, and~$\mu$ coincide with those stated in Theorem~\ref{thm:hermite}. The verification of the 
condition~\eqref{eq:int_cond_H_proper} is immediate in this case and hence $\mathcal{H}^{(3)}$ determines a unique operator~$H^{(3)}$. Furthermore, one deduces 
from~\eqref{eq:h_hank3} and~\eqref{eq:mu_hank3} that
\[
\spec\left(H^{(3)}\right)=\spec_{\text{ac}}\left(H^{(3)}\right)=\overline{h(\R)}=[\,0,\sqrt{2\pi}\,] \quad \mbox{ and } \quad \|H^{(3)}\|=\sqrt{2\pi}\,.
\]
Finally, the proof of Corollary~\ref{cor:hank3eo} is completely analogous to the proof of Corollary~\ref{cor:hank2eo}.

\subsection{Proof of Theorem~\ref{thm:dual_hahn} and Corollary~\ref{cor:dual_hahn}}

Let $N\in\N_{0}$ and $J$ be the $(N+1)\times(N+1)$ Jacobi matrix with diagonal and off-diagonal elements given by
\begin{equation}
 b_{n}=n(\delta+N+1-n)+(N-n)(n+\gamma+1)
\label{eq:b_n_dual_hahn}
\end{equation}
and
\begin{equation}
 a_{n}=-\sqrt{(n+1)(n+1+\gamma)(N-n)(N-n+\delta)}, 
\label{eq:a_n_dual_hahn}
\end{equation}
for $n\in\{0,1,\dots,N\}$. The parameters are restricted so that both $\gamma>-1$ and $\delta>-1$.

We again use the equation~\eqref{eq:commut_eq_reform} to find a weighted Hankel matrix commuting with~$J$
this time with the choice 
\begin{equation}
\kappa_{n}:=(N-n)(N-n+\delta). 
\label{eq:kappa_n_dual_hahn}
\end{equation}
It is easy to see that the equation~\eqref{eq:commut_eq_reform} is still equivalent to the commutativity of the respective finite $(N+1)\times(N+1)$
matrices if it holds for all $0\leq m<n\leq N$ and, importantly, $a_{-1}$ as well as $\kappa_{N}$ vanish. Clearly, with choices~\eqref{eq:a_n_dual_hahn}
and~\eqref{eq:kappa_n_dual_hahn}, $a_{-1}=\kappa_{N}=0$.

Substituting from~\eqref{eq:b_n_dual_hahn}, \eqref{eq:a_n_dual_hahn}, and~\eqref{eq:kappa_n_dual_hahn} into~\eqref{eq:commut_eq_reform}, one obtains a difference 
equation entirely in the variable $m+n$ after canceling the factor $m-n$ common to all the coefficients. Putting $k:=m+n$, the resulting equation reads
\[
 (2k-\delta+\gamma-2N)h_{k}-(k+\gamma)h_{k-1}-(k-2N-\delta)h_{k+1}=0,
\]
for $k\in\N_{0}$. The non-constant solutions is
\begin{equation}
h_{n}=(-1)^{n}(1+\gamma)_{n}(1+\delta)_{2N-n}, \quad n\in\{0,1,\dots,2N\}.
\label{eq:h_n_hank4}
\end{equation}
According to~\eqref{eq:w_kappa}, the weights are
\begin{equation}
 w_{n}=\frac{(-1)^{n}}{\sqrt{n!(N-n)!(1+\gamma)_{n}(1+\delta)_{N-n}}}, \quad n\in\{0,1,\dots,N\},
\label{eq:w_n_hank4}
\end{equation}
where we put $w_{0}:=1/\sqrt{N!(1+\delta)_{N}}$. The weighted Hankel matrix with elements $w_{m}w_{n}h_{m+n}$ coincides with $H^{(4)}$. Hence $H^{(4)}$ and~$J$ commute.

Recall the definition of the dual Hahn polynomials~\eqref{eq:def_dualhahn}, \eqref{eq:def_lam_dualhahn}, and put 
\begin{equation}
\hat{P}_{n}(x):=\sqrt{\frac{N!(1+\gamma)_{n}(1+\delta)_{N-n}}{n!(N-n)!(1+\delta)_{N}}}R_{n}\left(\lambda(x);\gamma,\delta,N\right)\!,
\label{eq:def_hatP_hank4}
\end{equation}
for $n\in\{0,1,\dots,N\}$. It follows from the three-term recurrence for the dual Hahn polynomials~~\cite[Eq.~9.6.3]{koekoek-etal_10} that
the vector $(\hat{P}_{0}(x),\hat{P}_{1}(x),\dots,\hat{P}_{N}(x))$
is the eigenvector of $J$ corresponding to the eigenvalue $\lambda(x)$, for $x\in\{0,1,\dots,N\}$.
At the same time, $\{\hat{P}_{n}\mid 0\leq n\leq N\}$ is an orthogonal basis of the Hilbert space $L^{2}(\R,\dd\mu)$,
where
\begin{equation}
 \mu=(1+\delta)_{N}N!\sum_{x=0}^{N}\frac{(2x+\gamma+\delta+1)(1+\gamma)_{x}}{(1+x+\gamma+\delta)_{N+1}(1+\delta)_{x}(N-x)!x!}\delta_{x}.
\label{eq:mu_hank4}
\end{equation}
for $\gamma,\delta>-1$; see~\cite[Eq.~9.6.2]{koekoek-etal_10}.

Consequently, the unitary mapping $U:\C^{N+1}\to L^{2}(\R,\dd\mu)$ defined by $Ue_{n}:=\hat{P}_{n}$, for $n\in\{0,1,\dots,N\}$,
diagonalizes $J$. Concretely, $UJU^{-1}$ acts as the multiplication operator by $\lambda(x)$ on $L^{2}(\R,\dd\mu)$. 
Since the spectrum of $J$ is simple and $H^{(4)}$ commutes with $J$, $H^{(4)}$ acts as the multiplication operator by a function $h$ on $L^{2}(\R,\dd\mu)$, where
\[
 h(x)=h(x)\hat{P}_{0}(x)=UH^{(4)}e_{0}=\sum_{n=0}^{N}H^{(4)}_{n,0}\hat{P}_{n}(x)=w_{0}\sum_{n=0}^{N}w_{n}h_{n}\hat{P}_{n}(x).
\]
Referring to formulas~\eqref{eq:h_n_hank4}, \eqref{eq:w_n_hank4}, and~\eqref{eq:def_hatP_hank4}, one gets
\begin{equation}
h(x)=\frac{1}{(1+\delta)_{N}}\sum_{n=0}^{N}\frac{(1+\gamma)_{n}(1+\delta)_{2N-n}}{n!(N-n)!}R_{n}\left(\lambda(x);\gamma,\delta,N\right).
\label{eq:h_hankl4_pre}
\end{equation}
Surprisingly, the above expression can be considerably simplified if $x\in\{0,1,\dots,N\}$.

\begin{lem}
 For $\gamma,\delta>-1$, $x\in\{0,1,\dots,N\}$, and $h$ defined by~\eqref{eq:h_hankl4_pre}, one has 
 \begin{equation}
 h(x)=\binom{2N+\gamma+\delta+1}{N-x}.
 \label{eq:h_hankl4}
 \end{equation}
\end{lem}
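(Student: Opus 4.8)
The plan is to insert the terminating ${}_3F_2$ series~\eqref{eq:def_dualhahn} for $R_n(\lambda(x);\gamma,\delta,N)$ into~\eqref{eq:h_hankl4_pre} and interchange the two finite summations, over $n$ and over the hypergeometric index $k$. After the swap the whole $x$-dependence factors out through $(-x)_k(x+\gamma+\delta+1)_k$, and what remains is an inner sum over $n$ that should sum in closed form. Since both sums are finite the interchange needs no justification beyond rearranging finitely many terms, so no convergence question arises.

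To execute the inner summation I would first apply the elementary Pochhammer identities $(-n)_k/n!=(-1)^k/(n-k)!$ and $(1+\gamma)_n/(\gamma+1)_k=(1+\gamma+k)_{n-k}$, the latter absorbing the denominator $(\gamma+1)_k$ produced by the ${}_3F_2$, and then shift the index by $j:=n-k$. Writing $M:=N-k$ and splitting off the factor $(1+\delta)_N$ via $(1+\delta)_{2N-k-j}=(1+\delta)_N(1+\delta+N)_{M-j}$, the inner sum becomes the Vandermonde convolution
\[
\sum_{j=0}^{M}\frac{(1+\gamma+k)_j\,(1+\delta+N)_{M-j}}{j!\,(M-j)!}=\frac{(2+\gamma+\delta+N+k)_M}{M!},
\]
i.e.\ the rising-factorial binomial theorem $\sum_j\binom{M}{j}(a)_j(b)_{M-j}=(a+b)_M$. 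This collapses the double sum to a single one.

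Substituting back, using $(-1)^k/(-N)_k=(N-k)!/N!$ and then rewriting $(2+\gamma+\delta+N+k)_{N-k}=(2+\gamma+\delta+N)_N/(2+\gamma+\delta+N)_k$, the factors $(1+\delta)_N$ and $(N-k)!$ cancel and I expect to arrive at
\[
h(x)=\frac{(2+\gamma+\delta+N)_N}{N!}\,\pFq{2}{1}{-x,\,x+\gamma+\delta+1}{2+\gamma+\delta+N}{1}.
\]
The terminating ${}_2F_1$ at unit argument is then evaluated by the Chu--Vandermonde identity $\pFq{2}{1}{-x,b}{c}{1}=(c-b)_x/(c)_x$, which here gives $(N+1-x)_x/(2+\gamma+\delta+N)_x$ with $(N+1-x)_x=N!/(N-x)!$. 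A final bookkeeping step, $(2+\gamma+\delta+N)_N/(2+\gamma+\delta+N)_x=(2+\gamma+\delta+N+x)_{N-x}$, yields $h(x)=(2+\gamma+\delta+N+x)_{N-x}/(N-x)!$, which is precisely $\binom{2N+\gamma+\delta+1}{N-x}$ by $\binom{z}{m}=(z-m+1)_m/m!$.

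The main obstacle is purely the Pochhammer bookkeeping in identifying the two Vandermonde-type summations: once the inner sum is recognized as the binomial convolution and the surviving outer sum as a terminating Chu--Vandermonde ${}_2F_1(1)$, the evaluation is mechanical. I would keep an eye on the boundary cases $x=0$ and $x=N$ and on the fact that $(-x)_k$ already truncates the $k$-sum at $k=x$, so that the Chu--Vandermonde evaluation applies with $x\in\{0,1,\dots,N\}$ as required.
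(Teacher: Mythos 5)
Your proposal is correct, and every step checks out: the interchange of the two finite sums, the Pochhammer identities $(-n)_k/n!=(-1)^k/(n-k)!$ and $(1+\gamma)_n/(1+\gamma)_k=(1+\gamma+k)_{n-k}$, the rising-factorial Vandermonde convolution $\sum_{j=0}^{M}\binom{M}{j}(a)_j(b)_{M-j}=(a+b)_M$ for the inner sum, and the terminating Chu--Vandermonde evaluation of the resulting ${}_2F_1$ at unit argument, followed by the final Pochhammer-to-binomial conversion. However, your route is genuinely different from the paper's. The paper does not expand $R_n$ as a ${}_3F_2$ at all; instead it uses the reflection identity $(1+\delta)_{2N-n}/(1+\delta)_{N}=(-1)^{N+n}(n-2N-\delta)_{N}/(-N-\delta)_{n}$ to recognize $h(x)$ as $\frac{(-1)^N}{N!}\frac{\dd^N}{\dd t^N}\big|_{t=1}\Phi(t)$, where $\Phi$ is built from the known generating function for dual Hahn polynomials (Koekoek--Lesky--Swarttouw, Eq.~9.6.12), namely $\sum_{n=0}^{N}\frac{(-N)_n(\gamma+1)_n}{n!(-N-\delta)_n}R_n(\lambda(x);\gamma,\delta,N)\,t^n=(1-t)^x\,{}_2F_1(x-N,x+\gamma+1;-\delta-N;t)$; differentiating $N$ times at $t=1$ then produces a single terminating ${}_2F_1(1)$, which is again closed by Chu--Vandermonde. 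So both arguments funnel through Chu--Vandermonde at the end, but yours trades the off-the-shelf generating function and the $N$-fold differentiation trick for a purely elementary double-sum rearrangement: it is more self-contained (only the ${}_3F_2$ definition plus two classical Vandermonde-type summations are needed), whereas the paper's version outsources the heavy combinatorics to a tabulated identity and keeps the in-text computation shorter. One small virtue of your approach worth noting is that the truncation $(-x)_k=0$ for $k>x$ makes the applicability of Chu--Vandermonde for all $x\in\{0,1,\dots,N\}$ completely transparent, including the boundary cases $x=0$ and $x=N$.
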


\begin{proof}
With the aid of the identity
 \[
  \frac{(1+\delta)_{2N-n}}{(1+\delta)_{N}}=(-1)^{N+n}\frac{(n-2N-\delta)_{N}}{(-N-\delta)_{n}}, \quad n\in\{0,1,\dots,N\},
 \]
one can rewrite the formula~\eqref{eq:h_hankl4_pre} as
\begin{equation}
  h(x)=\frac{(-1)^{N}}{N!}\frac{\dd^{N}}{\dd t^{N}}\bigg|_{t=1}\Phi(t),
  \label{eq:h_eq_der_Phi}
\end{equation}
where
\[
 \Phi(t):=\sum_{n=0}^{N}\frac{(-N)_{n}(\gamma+1)_{n}}{n!(-N-\delta)_{n}}R_{n}\left(\lambda(x);\gamma,\delta,N\right)t^{n-N-\delta-1}.
\]
Since, for $x=0,1,\dots,N$, it holds~\cite[Eq.~9.6.12]{koekoek-etal_10}
\[
 \sum_{n=0}^{N}\frac{(-N)_{n}(\gamma+1)_{n}}{n!(-N-\delta)_{n}}R_{n}\left(\lambda(x);\gamma,\delta,N\right)t^{n}=(1-t)^{x}\pFq{2}{1}{x-N,x+\gamma+1}{-\delta-N}{t},
\]
we get
\[
 \Phi(t)=t^{-N-\delta-1}(1-t)^{x}\pFq{2}{1}{x-N,x+\gamma+1}{-\delta-N}{t}.
\]

Further, by differentiation and making use of the definition of the Gauss hypergeometric function, we obtain
\begin{align*}
 \frac{\dd^{N}}{\dd t^{N}}\bigg|_{t=1}\Phi(t)&=\frac{(-1)^{x}N!}{(N-x)!}\frac{\dd^{N-x}}{\dd t^{N-x}}\bigg|_{t=1}t^{-N-\delta-1}\pFq{2}{1}{x-N,x+\gamma+1}{-\delta-N}{t}\\
 &=(-N)_{x}\sum_{k=0}^{N-x}\frac{(x-N)_{k}(x+\gamma+1)_{k}(k-2N-\delta+x)_{N-x-k}}{k!}\\
 &=(-N)_{x}(x-2N-\delta)_{N-x}\,\pFq{2}{1}{x-N,x+\gamma+1}{x-2N-\delta}{1},
\end{align*}
for $x=0,1,\dots,N$. The above Gauss hypergeometric function can be evaluated by the Chu--Vandermonde identity~\cite[Eq.~15.4.24]{dlmf}
\[
 \pFq{2}{1}{-n,b}{c}{1}=\frac{(c-b)_{n}}{(c)_{n}}, \quad n\in\N_{0},
\]
which results in the formula
\begin{equation}
\frac{\dd^{N}}{\dd t^{N}}\bigg|_{t=1}\Phi(t)=(-N)_{x}(-2N-\gamma-\delta-1)_{N-x},
 \label{eq:der_Phi_eval}
\end{equation}
for $x\in\{0,1,\dots,N\}$. The combination of~\eqref{eq:h_eq_der_Phi} and~\eqref{eq:der_Phi_eval} yields the statement.
\end{proof}

The formulas~\eqref{eq:def_hatP_hank4}, \eqref{eq:mu_hank4} and~\eqref{eq:h_hankl4} are those stated in Theorem~\ref{thm:dual_hahn}. Moreover, 
since $h(x)$, for $x\in\{0,1,\dots,N\}$, are eigenvalues of $H^{(4)}$, the formula~\eqref{eq:h_hankl4} completes the proof of Theorem~\ref{thm:dual_hahn}.

As far as the proof of Corollary~\ref{cor:dual_hahn} is concerned, it suffices to note that the Hankel matrix $G$ is the ``Hankel part'' of $H^{(4)}$ meaning 
that $H^{(4)}=WGW$, where $W=\diag(w_{0},w_{1},\dots w_{N})$. Consequently,
one has
\[
 \det G = \left(\prod_{n=0}^{N}w_{n}^{-2}\right)\left(\prod_{x=0}^{N}h(x)\right)\!.
\]
By using formulas~\eqref{eq:w_n_hank4} and \eqref{eq:h_hankl4} and making simple manipulations, 
one derives the formula from the statement of Corollary~\ref{cor:dual_hahn} for $\gamma,\delta>-1$.
Since both sides of the resulting identity are polynomials in $\gamma$ and $\delta$, the equality has to remain true for all $\gamma,\delta\in\C$.

\section{Consequences for special functions} \label{sec:conseq_spec_func}

From the obtained results, one can derive certain identities for the involved orthogonal polynomials or alternatively the hypergeometric functions.
Some of them are known, some seem to be new. We list these identities in this section.

In all the studied cases, we have an operator $H$ on $\ell^{2}(\N)$ together with a unitary mapping 
\[
 U:\ell^{2}(\N_{0})\to L^{2}(\R,\dd\mu):e_{n}\to\hat{P}_{n},
\]
such that $UHU^{-1}$ is the multiplication operator by a function $h$ on $L^{2}(\R,\dd\mu)$. Consequently, one has
\begin{equation}
 H_{m,n}=\langle e_{m}, He_{n}\rangle_{\ell^{2}(\N_{0})}=\langle \hat{P}_{m}, h\hat{P}_{n}\rangle_{L^{2}(\R,\dd\mu)}=\int_{\R}h(x)\hat{P}_{m}(x)\hat{P}_{n}(x)\dd\mu(x),
\label{eq:int_ident_gener}						
\end{equation}
for $m,n\in\N_{0}$.

We go through the list of studied weighted Hankel matrices and substitute in~\eqref{eq:int_ident_gener} for the respective matrix element on the left-hand side as well as
$h$, $\mu$, and $\hat{P}_{n}$ on the right-hand side. For example, making use of~\eqref{eq:def_hank1} and~\eqref{eq:def_hatP_hank1_meixpol}, \eqref{eq:mu_hank1_meixpol}, 
\eqref{eq:h_hank1_meixpol} and also~\eqref{eq:param_iden_meixpol} in~\eqref{eq:int_ident_gener}, one arrives at the following integral formula for Meixner--Pollaczek 
polynomials after some simple manipulations:
\begin{equation}
 \int_{\R}e^{(4\phi-\pi)x}P_{m}^{(\lambda)}(x;\phi)P_{n}^{(\lambda)}(x;\phi)|\Gamma(\lambda+\ii x)|^{2}\dd x =
 \frac{\pi\Gamma(m+n+2\lambda)}{2^{m+n+2\lambda-1}\sin^{2\lambda}(2\phi)\cos^{m+n}(\phi)m!n!},
\label{eq:int_ident_hank1_meixpol}
\end{equation}
for $m,n\in\N_{0}$, $\lambda>0$ and $0<\phi<\pi/3$. Since the integral on the left-hand side of~\eqref{eq:int_ident_hank1_meixpol} converges for $0<\phi<\pi/2$ 
and both sides are analytic in $\phi$ on this interval, the identity~\eqref{eq:int_ident_hank1_meixpol} remains true for all $\phi\in(0,\pi/2)$.

Similarly, using~\eqref{eq:def_hank1} with $k=1/2$ and~\eqref{eq:def_hatP_hank1_laguerre}, \eqref{eq:mu_hank1_laguerre}, \eqref{eq:h_hank1_laguerre} 
in~\eqref{eq:int_ident_gener}, one gets the integral formula for Laguerre polynomials
\begin{equation}
 \int_{0}^{\infty}L_{m}^{(\alpha)}(x)L_{n}^{(\alpha)}(x)x^{\alpha}e^{-2x}\dd x =
 \frac{\Gamma(m+n+\alpha+1)}{2^{m+n+\alpha+1}m!n!},
\label{eq:int_ident_hank1_laguerre}
\end{equation}
for $m,n\in\N_{0}$ and $\alpha>-1$. The identity~\eqref{eq:int_ident_hank1_laguerre} is actually known in a more general form; see~\cite[Eq.~7.414 ad~4]{gradsteyn-ryzhik07} (with
$\lambda=\mu=1$, $b=2$). Next, substituting from ~\eqref{eq:def_hank1} and~\eqref{eq:def_hatP_hank1_meixner}, \eqref{eq:mu_hank1_meixner}, \eqref{eq:h_hank1_meixner} and 
also~\eqref{eq:param_iden_meixner} in~\eqref{eq:int_ident_gener}, one obtains the summation formula for Meixner polynomials
\[
 \sum_{x=0}^{\infty}\frac{(\beta)_{x}}{x!}c^{2x}M_{m}(x;\beta,c)M_{n}(x;\beta,c)
 =\frac{(\beta)_{m+n}}{(1-c)^{\beta}(1+c)^{m+n+\beta}(\beta)_{m}(\beta)_{n}},
\]
for $m,n\in\N_{0}$, $\beta>0$ and $c\in(0,1)$.

Further, \eqref{eq:def_hank2}, \eqref{eq:def_hatP_hank2}, \eqref{eq:mu_hank2}, \eqref{eq:h_hank2} used in~\eqref{eq:int_ident_gener} yields
\begin{align}
 \int_{\R}&P_{m}^{(\lambda)}\left(x;\frac{\pi}{2}\right)P_{n}^{(\lambda)}\left(x;\frac{\pi}{2}\right)|\Gamma(\lambda+\ii x)|^{4}\dd x \nonumber\\
 &\hskip68pt=(-1)^{\left[\frac{m+1}{2}\right]+\left[\frac{n+1}{2}\right]}\,\frac{\pi\,\Gamma(2\lambda)\Gamma(2\lambda+m)\Gamma(2\lambda+n)\Gamma((m+n+1)/2)}{4^{2\lambda-1}\,m!\,n!\,\Gamma(2\lambda+(m+n+1)/2)},
 \label{eq:int_ident_hank2}
\end{align}
for $m,n\in\N_{0}$ such that $m+n$ is even and $\lambda>0$. Since the parity of the polynomial $P_{n}^{(\lambda)}\left(\,\cdot\,;\pi/2\right)$ equals the parity of the index $n$,
the integral on the left-hand side of~\eqref{eq:int_ident_hank2} is clearly vanishing if $m+n$ is odd.

The third Hankel matrix~\eqref{eq:def_hank3} and formulas~\eqref{eq:def_hatP_hank3}, \eqref{eq:mu_hank3} and~\eqref{eq:h_hank3} used in~\eqref{eq:int_ident_gener}
yields the integral formula for Hermite polynomials
\begin{equation}
 \int_{\R}e^{-2x^{2}}H_{m}(x)H_{n}(x)\dd x=(-1)^{\left[\frac{m+1}{2}\right]+\left[\frac{n+1}{2}\right]}\,2^{(m+n-1)/2}\Gamma\left(\frac{m+n+1}{2}\right)\!,
 \label{eq:int_ident_hank3}
\end{equation}
for $m,n\in\N_{0}$ such that $m+n$ is even. The integral in~\eqref{eq:int_ident_hank3} obviously vanishes if $m+n$ is odd.
The identity~\eqref{eq:int_ident_hank3} is known, see~\cite[Eq.~7.374 ad~2]{gradsteyn-ryzhik07}.

Lastly, the finite weighted Hankel matrix~\eqref{eq:def_hank4} and corresponding formulas~\eqref{eq:def_hatP_hank4}, \eqref{eq:mu_hank4} and~\eqref{eq:h_hankl4} used in~\eqref{eq:int_ident_gener}
lead to the following identity for dual Hahn polynomials:
\begin{align*}
 \sum_{x=0}^{N}&\binom{2N+\gamma+\delta+1}{N-x}\frac{(2x+\gamma+\delta+1)(1+\gamma)_{x}}{(1+x+\gamma+\delta)_{N+1}(1+\delta)_{x}(N-x)!x!}\\
 &\times R_{m}(\lambda(x);\gamma,\delta,N)R_{n}(\lambda(x);\gamma,\delta,N)
 =\frac{(1+\gamma)_{m+n}(1+\delta)_{2N-m-n}}{(N!)^{2}(1+\gamma)_{m}(1+\gamma)_{n}(1+\delta)_{N-m}(1+\delta)_{N-n}},
\end{align*}
for $m,n\in\{0,1,\dots,N\}$, $N\in\N_{0}$ and $\gamma,\delta>-1$, where $\lambda(x)=x(x+\gamma+\delta+1)$.

Yet another corollary of Theorem~\ref{thm:dual_hahn} yields quite interesting identities with binomial coefficients. Namely, denoting
$\lambda_{0},\lambda_{1},\dots\lambda_{N}$ the eigenvalues of $H^{(4)}$, the equality
\begin{equation}
 \Tr H^{(4)}=\sum_{x=0}^{N}\lambda_{x}
\label{eq:trace_matr_eigen}
\end{equation}
implies the identity
\[
 \sum_{m=0}^{N}\binom{2m+\gamma}{m}\binom{2N-2m+\delta}{N-m}=\sum_{x=0}^{N}\binom{2N+1+\gamma+\delta}{x},
\]
for $N\in\N_{0}$ and $\gamma,\delta>-1$. Second, the equality 
\begin{equation}
 \Tr\left(H^{(4)}\right)^{\!2}=\sum_{x=0}^{N}\lambda_{x}^{2}
\label{eq:trace_matr_eigen_square}
\end{equation}
means that
\begin{align*}
 \sum_{m,n=0}^{N}\binom{m+n+\gamma}{m}\binom{m+n+\gamma}{n}\binom{2N-m-n+\delta}{N-m}&\binom{2N-m-n+\delta}{N-n}\\
 &\hskip48pt=\sum_{x=0}^{N}\binom{2N+1+\gamma+\delta}{x}^{\! 2}\!,
\end{align*}
for $N\in\N_{0}$ and $\gamma,\delta>-1$. 

Finally, let us remark that the infinite analogues of the formulas~\eqref{eq:trace_matr_eigen} and~\eqref{eq:trace_matr_eigen_square}, i.e.,
the Lidskii theorem and a formula for the Hilbert--Schmidt norm, hold for trace class operators and hence can be also applied to $H^{(1)}$ in the case when $0<k<1/2$.
However, by using the spectral properties summarized in Theorem~\ref{thm:meixpol_lag_meix}, case (iii), one obtains nothing but special cases of the well-known 
identity~\cite[Eq.~15.4.18]{dlmf}
\[
 \pFq{2}{1}{a,a+\frac{1}{2}}{2a}{z}=\frac{1}{\sqrt{1-z}}\left(\frac{1}{2}+\frac{1}{2}\sqrt{1-z}\right)^{1-2a}.
\]

\section*{Acknowledgement}
The authors acknowledge financial support by the Ministry of Education, Youth and Sports of the Czech Republic 
project no. CZ.02.1.01/0.0/0.0/16\_019/0000778.

\end{document}